\newtheorem{theorem}{Theorem}[section]
\newtheorem{lemma}[theorem]{Lemma}
\newtheorem{proposition}[theorem]{Proposition}
\newtheorem{condition}[theorem]{Condition}
\theoremstyle{remark}
\newtheorem{remark}[theorem]{Remark}
\numberwithin{equation}{section}
\newcommand \id{\mathds 1}
\begin{document}
\title[A second moment bound for critical points of planar Gaussian fields]{A second moment bound for \\ critical points of planar Gaussian fields \\ in shrinking height windows}
\author{Stephen Muirhead}
\email{s.muirhead@qmul.ac.uk}
\address{School of Mathematical Sciences, Queen Mary University of London}
\begin{abstract}
We consider the number of critical points of a stationary planar Gaussian field, restricted to a large domain, whose heights lie in a certain interval. Asymptotics for the mean of this quantity are simple to establish via the Kac-Rice formula, and recently Estrade and Fournier proved a second moment bound that is optimal in the case that the height interval does not depend on the size of the domain. We establish an improved bound in the more delicate case of height windows that are shrinking with the size of the domain.
\end{abstract}
\date{\today}
\thanks{We would like to thank Dmitry Beliaev and Michael McAuley for helpful discussions, especially with respect to the application to the number of level and excursion sets \cite{bmm19} that motivated this work, and Igor Wigman for assisting with references. We would also like to thank an anonymous referee for helpful corrections and for pointing out reference \cite{AL}.}
\keywords{Gaussian fields, critical points, second moment bound}
\subjclass[2010]{60G60 (primary); 60F99 (secondary)} 

\maketitle

\section{Introduction}
Let $f$ be a $C^1$-smooth stationary planar Gaussian field, and denote by $\kappa(x) = \textrm{Cov}[f(0),f(x)]$ its covariance kernel. For each $R > 0$ and $a \le b$, let $B_R$ denote the ball of radius $R$ centred at the origin, and let $N_R[a, b]$ denote the number of critical points of $f$ inside $B_R$ whose heights (i.e.\ `critical values') lie in the interval $[a, b]$, i.e.,
\[ N_R[a, b] = \# \{  x \in B_R :  f(x) \in [a, b], \nabla f(x) = 0 \} ,\]
where $\# S$ denotes cardinality of a set $S$. A simple application of the Kac-Rice formula shows that, under mild conditions on $\kappa$, the mean of $N_R[a, b]$ is of order $O(R^2 (b-a))$, and it is not difficult to compute asymptotics for $\mathbb{E}[N_R[a,b]]/R^2$ explicitly (see, e.g., \cite{cammarota2014distribution,cheng2015expected} for special cases). On the other hand, the second moment of $N_R[a, b]$ is a more difficult quantity to control, and indeed its finiteness was only established recently \cite{ef_2016} (see also \cite{AL, eli, EL}), with the finiteness of higher moments remaining an important open question. 

Out of the proof of \cite{ef_2016} one can show that there exists a $c > 0$ such that, for each $R \ge 1$ and $a \le b$, $\mathbb{E}[N_R[a, b]^2] \le c R^4$. This bound is of the correct order when the height window $[a, b]$ is fixed (see also \cite{cammarota2017fluctuations,nic_2017}, in which asymptotics for $\mathbb{E}[N_R[a, b]^2]/R^4$ are computed for $[a, b]$ fixed) but is far from optimal if $b-a \to 0$ as $R \to \infty$. Our aim in this note is to derive bounds on $\mathbb{E}[N_R[a, b]^2]$ that remain optimal in the more delicate regime in which $b-a \to 0$ as $R \to \infty$ (`shrinking height windows'). Such bounds have applications in analysing the variance of geometric functionals of planar Gaussian fields, such as the number of level or excursion sets contained in a large domain \cite{bmm19}.

\smallskip
To state our main result we suppose that the following smoothness, non-degeneracy and decay conditions hold:
 \begin{condition}
\label{c:1}
\
\begin{itemize}
\item The covariance kernel $\kappa$ is of class $C^6$.
\item For each $x \in \mathbb{R}^2 \setminus \{0\}$, the Gaussian vector $( f(0), f(x), \nabla f(0), \nabla f(x) ) $ is non-degenerate.
\item As $|x| \to \infty$, $\max_{|\alpha| \le 2} | \partial^\alpha \kappa(x) | \to 0$.
\end{itemize}
\end{condition}

The first condition implies that $f$ is almost surely $C^2$-smooth, and for all multi-indices $\alpha_1$ and $\alpha_2$ such that $ |\alpha_1|, |\alpha_2| \in \{0,1, 2\}$, $(\partial^{\alpha_1} f(0), \partial^{\alpha_2} f(x))$ is Gaussian with covariance  $\textrm{Cov}[ \partial^{\alpha_1} f(0) , \partial^{\alpha_2} f(x)  ] = (-1)^{|\alpha_1|}\partial^{\alpha_1 + \alpha_2} \kappa(x) $.

\smallskip
We also need an extra condition on the support of the spectral measure $\rho$, defined to satisfy $ \kappa(x) = \int_{\mathbb{R}^2} e^{i \langle x, s \rangle } d\rho(s) $.

\begin{condition}
\label{c:2}
The support of $\rho$ is not contained in the union of two lines.
\end{condition}

Conditions \ref{c:1} and \ref{c:2} are extremely mild, and will be satisfied in most applications. Notably, while these conditions imply that $f(0)$ and $\nabla^2 f(0)$ are non-degenerate, we do \textit{not} insist that $(f(0), \nabla^2 f(0))$ be \textit{jointly} non-degenerate, and so they hold in particular for the `random plane wave' (the case $\kappa(x) = J_0(|x|)$, where $J_0$ is the zeroth Bessel function; see, e.g., \cite{beliaev2017two, Berry, NPR}).
 
\smallskip
Our main result on the number of critical points of $f$ is the following:

\begin{theorem}
\label{t:smb}
Suppose $f$ satisfies Conditions \ref{c:1} and \ref{c:2}. Then there exists a $c > 0$ such that, for all $R \ge 1$ and $a \le b$, 
\[  \mathbb{E}[N_R[a, b]^2]  \le c \,  \min\{ R^4 (b-a)^2 + R^2 (b-a) , \, R^4 \}   .  \]
\end{theorem}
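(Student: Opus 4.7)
The plan is to apply the Kac-Rice formula to the factorial second moment, exploit stationarity to reduce to a one-variable integral, and then extract factors of $(b-a)$ separately in the near-diagonal and far-diagonal regimes. Writing
\[
\mathbb{E}\bigl[N_R[a,b]^2\bigr] = \mathbb{E}\bigl[N_R[a,b]\bigr] + \int_{B_R \times B_R} K_2(x,y) \, dx \, dy ,
\]
with $K_2(x,y)$ the usual two-point Kac-Rice density
\[
K_2(x,y) = \mathbb{E}\!\left[ |\det \nabla^2 f(x)| \, |\det \nabla^2 f(y)| \, \id_{\{f(x), f(y) \in [a,b]\}} \,\big|\, \nabla f(x) = \nabla f(y) = 0 \right] \phi_{\nabla f(x), \nabla f(y)}(0,0) ,
\]
the single-point Kac-Rice formula immediately gives $\mathbb{E}[N_R[a,b]] \le c R^2 (b-a)$, which produces the $R^2(b-a)$ summand. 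By stationarity $K_2(x,y)$ depends only on $v = y-x$; writing this as $\tilde K_2(v)$, the double integral is bounded by $|B_R| \int_{|v| \le 2R} \tilde K_2(v) \, dv$, and the target reduces to
\[
\int_{|v| \le 2R} \tilde K_2(v) \, dv \le c \bigl[ R^2(b-a)^2 + (b-a) \bigr] .
\]
The alternative bound $c R^4$ is inherited from the Estrade-Fournier argument applied with $[a,b] = \R$, and taking the minimum of the two bounds gives the claimed result.

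Fix a small threshold $r_0 > 0$ and split the $v$-integral. In the far regime $r_0 \le |v| \le 2R$, Condition \ref{c:1} provides joint non-degeneracy of $(f(0), f(v), \nabla f(0), \nabla f(v))$, and the decay of $\kappa$ together with $C^6$ regularity ensures that the joint density and the relevant conditional expectation of $|\det \nabla^2 f(0) \det \nabla^2 f(v)|$ are bounded uniformly in $|v| \ge r_0$ and in $(s,t) \in [a,b]^2$. Consequently $\id_{\{f(0), f(v) \in [a,b]\}}$ contributes a full factor of $(b-a)^2$ and $\int_{r_0 \le |v| \le 2R} \tilde K_2(v) \, dv \le c R^2 (b-a)^2$.

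The main obstacle is the near-diagonal regime $|v| \le r_0$, where $\phi_{\nabla f(0), \nabla f(v)}(0,0)$ blows up like $|v|^{-2}$ and $(f(0), f(v))$ becomes asymptotically degenerate, so that only one factor of $(b-a)$ can be extracted. The strategy is to dominate $\id_{\{f(0), f(v) \in [a,b]\}} \le \id_{\{f(0) \in [a,b]\}}$ and then condition additionally on $f(0) = s$, giving
\[
\tilde K_2(v) \le (b-a) \sup_{s \in [a,b]} p_s(v) ,
\]
where $p_s(v) = \mathbb{E}\bigl[ |\det \nabla^2 f(0) \det \nabla^2 f(v)| \bigm| f(0) = s, \nabla f(0) = \nabla f(v) = 0 \bigr] \phi_{f(0), \nabla f(0), \nabla f(v)}(s,0,0)$. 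The key technical step is to prove $\int_{|v| \le r_0} \sup_s p_s(v) \, dv < \infty$. This needs two ingredients. First, a uniform positive lower bound on $\mathrm{Var}(f(0) \mid \nabla f(0) = \nabla f(v) = 0)$ for $v \in B_{r_0}$: using the Taylor expansion $\nabla f(v) = \nabla f(0) + \nabla^2 f(0) v + O(|v|^2)$, this amounts as $v \to 0$ to non-degeneracy of $(f(0), \nabla f(0), \nabla^2 f(0) \hat v)$ for every unit vector $\hat v$, to be extracted from Conditions \ref{c:1} and \ref{c:2}. Second, an adaptation of the Estrade-Fournier finiteness argument showing that the $|v|^{-2}$ singularity of the gradient density is offset by the forced near-vanishing of $\nabla^2 f(0) \hat v$ on the conditioning event (which forces $\det \nabla^2 f(0)$ and $\det \nabla^2 f(v)$ to be of order $|v|$), and that this cancellation survives the extra conditioning on $f(0) = s$ with bounds uniform in $s \in [a,b]$. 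Combining the two regimes yields the target bound.
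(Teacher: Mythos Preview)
Your proposal is correct and follows essentially the same strategy as the paper: the near/far-diagonal split, extracting $(b-a)^2$ far from the diagonal via joint non-degeneracy of $(f(0),f(v),\nabla f(0),\nabla f(v))$, extracting only one factor of $(b-a)$ near the diagonal by conditioning on $f(0)=s$ alone, and then showing the resulting intensity is uniformly bounded via exactly the cancellation you describe (the paper makes this explicit through Taylor expansions of the covariance matrices, and the role of Condition~\ref{c:2} is precisely to guarantee the strict inequalities $\mathrm{Var}[\partial_v^{(2,0)}f]>1$ and $\mathrm{Var}[\partial_v^{(1,1)}f]>0$ that you need for the conditional-variance lower bound and the offsetting of the $|v|^{-2}$ singularity). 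The only organizational difference is that the paper first proves a uniform version for collections of non-stationary fields (Theorem~\ref{t:smb2}) and then checks its hypotheses for the stationary $f$, whereas you work directly with the stationary field.
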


\begin{remark}
The bound exhibits crossover behaviour if $b-a \ll 1/R^2$, which is the regime in which $\mathbb{E}[N_R[a, b]^2] \ll 1$, and hence also $\mathbb{P}[N_R[a, b] \ge 1] \ll 1$. 
\end{remark}

\begin{remark}
As in \cite{AL,ef_2016}, we could probably replace the condition that $\kappa$ is $C^6$ with the weaker condition that $\kappa$ is $C^{4+}$ and satisfies a \textit{Geman condition} \cite{geman,KL}, i.e.\ there exists a $\delta > 0$ such that
\[    \max_{|\alpha| = 4}  \int_{|x| < \delta} \frac{ | \partial^\alpha \kappa(x) - \partial^\alpha \kappa(0) | }{|x|^2} < \infty .\]
Since optimum conditions for Theorem \ref{t:smb} are not our primary interest, we work with the simpler condition here.
\end{remark}

\begin{remark}
It is likely that our analysis could extend to higher dimensional fields (as in~\cite{ef_2016}), but this would increase the computational complexity of our proof (especially of Lemma~\ref{l:2}). On the other hand, our analysis goes through unchanged in the (easier) one-dimensional case; we discuss this in the appendix.
\end{remark}

Naturally, the constant $c$ in Theorem \ref{t:smb} depends on the Gaussian field $f$. Our second result gives a bound that is uniform over a collection of Gaussian fields, which is useful in applications \cite{bmm19}. 

\begin{theorem}
\label{t:smb2}
Let $(f_i)_{i \in \mathcal{I}}$ be a collection of continuous (not necessarily stationary) Gaussian fields, each defined on a compact domain $D_i \subset \mathbb{R}^2$, with a $C^{3,3}$-smooth covariance kernel. Let $N_i[a, b]$ be the number of critical points of $f_i$ in $D_i$ whose heights lie in $[a, b]$. Suppose that:
\begin{enumerate}
\item The fields are normalised so that, for each $i \in \mathcal{I}$ and $x \in D_i$,
\[ \mathbb{E}[f_i(x)]  = 0 \, , \ \textrm{Var}[f_i(x)] = 1 \, , \ \text{Cov}[f_i(x), \nabla f_i(x) ] = 0 \quad \text{and} \quad   \text{Cov}[ \nabla f_i(x), \nabla f_i(x) ] = \id_2 , \]
and $( f_i(x), f_i(y), \nabla f_i(x), \nabla f_i(y) )$ is non-degenerate for all distinct $x, y \in D_i$;
\item There exists a constant $c_1 > 0$ such that $\sup_{i \in \mathcal{I}} \sup_{x \in D_i } \max_{|\alpha| \le 3}   \textrm{Var}[\partial^{\alpha} f_i(x)] < c_1  $;
\item There exists a constant $c_2>0$ such that
\[  \inf_{i \in \mathcal{I}} \inf_{x \in D_i, v \in \mathbb{S}^1}   \textrm{Var}[\partial_v^{(2, 0)} f_i(x)]  > 1 + c_2 \quad \text{and} \quad \inf_{i \in \mathcal{I}} \inf_{x \in D_i, v \in \mathbb{S}^1}  \textrm{Var}[\partial_v^{(1, 1)} f_i(x)]  > c_2  ,   \]
where $\partial_v$ denotes the derivative with respect to coordinate axes in the $v$ direction;  
\item For each $\delta > 0$, there exists a constant $c_3 > 0$ such that
\[ \inf_{i \in \mathcal{I}} \inf_{|x-y|  \ge \delta} \det(\Sigma^i_1(x,y))  > c_3 \quad \text{and} \quad  \inf_{i \in \mathcal{I}}\inf_{ |x-y| \ge \delta} \det(\Sigma^i_2(x,y))  > c_3 , \]
where  $\Sigma^i_1(x,y)$ and $\Sigma^i_2(x,y)$ denote respectively the covariance matrices of 
\[     (f_i(x), f_i(y) ) \, | \, (\nabla f_i(x), \nabla f_i(y) )  \quad \text{and} \quad  (\nabla f_i(x), \nabla f_i(y) )  .\]
\end{enumerate}
Then there exists a $c > 0$ such that, for all $i \in \mathcal{I}$ and $a \le b$,
\[  \mathbb{E}[N_i[a, b]^2]  \le c \,  \min\{ \text{Area}(D_i)^2 (b-a)^2 + \text{Area}(D_i) (b-a) , \, \text{Area}(D_i)^2  + \text{Area}(D_i) \}   .  \]
\end{theorem}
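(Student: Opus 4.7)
The plan is to adapt the proof of Theorem~\ref{t:smb} to the non-stationary uniform setting, verifying that every estimate is controlled only through the parameters $c_1, c_2, c_3$ appearing in the hypotheses; crucially, the proof of Theorem~\ref{t:smb} is essentially local and does not exploit stationarity in any way that cannot be localised. By the Kac-Rice formula for second factorial moments,
\[
\mathbb{E}[N_i[a,b]^2] = \mathbb{E}[N_i[a,b]] + \int_{D_i\times D_i} K_2^i(x,y)\,dx\,dy,
\]
where $K_2^i(x,y)$ is the two-point intensity of critical points with heights in $[a,b]$. The first-order Kac-Rice formula together with conditions~(1)--(3) gives $\mathbb{E}[N_i[a,b]] \le c\,\text{Area}(D_i)(b-a)$ uniformly in $i$, and the crude bound $\mathbb{E}[N_i[a,b]] \le c\,\text{Area}(D_i)$ (obtained by discarding the height event) handles the case of large windows.

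For the double integral, fix a small $\delta>0$ and split on $|x-y|$. On the far region $\{|x-y|\ge\delta\}$, condition~(4) provides uniform lower bounds on $\det \Sigma^i_1$ and $\det \Sigma^i_2$, so both the density $p_{\nabla f_i(x),\nabla f_i(y)}(0,0)$ and the conditional density of $(f_i(x),f_i(y))$ given $\nabla f_i(x)=\nabla f_i(y)=0$ are uniformly bounded; condition~(2) then bounds $\mathbb{E}[|\det\nabla^2 f_i(x)|\,|\det\nabla^2 f_i(y)| \,|\, \nabla f_i(x)=\nabla f_i(y)=0]$ by Cauchy-Schwarz. This region contributes at most $c\,\text{Area}(D_i)^2(b-a)^2$, while omitting the height constraint yields the complementary uniform bound $c\,\text{Area}(D_i)^2$ responsible for the second branch of the $\min$.

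The main obstacle is the near-diagonal region $\{|x-y|<\delta\}$, where $(\nabla f_i(x),\nabla f_i(y))$ is nearly singular. Following Theorem~\ref{t:smb}, I would pass to the divided-difference variable
\[
\nabla f_i(y)-\nabla f_i(x)=\int_0^1\nabla^2 f_i(x+t(y-x))(y-x)\,dt,
\]
so that the event $\nabla f_i(x)=\nabla f_i(y)=0$ becomes conditioning on $\nabla f_i(x)=0$ and on two scalar integrated-Hessian constraints, picking up a Jacobian of order $|y-x|^2$. A second-order Taylor expansion of $f_i(y)-f_i(x)$ then extracts a factor of $(b-a)$ from the joint height event. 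Condition~(3) plays the decisive role here: differentiating the normalisations in~(1) twice gives $\textrm{Cov}[f_i(x),\partial_v^{(2,0)}f_i(x)]=-1$ for every unit $v$, so the conditional variance of $\partial_v^{(2,0)}f_i(x)$ after conditioning on the heights is at least $\textrm{Var}[\partial_v^{(2,0)}f_i(x)]-1>c_2$, and the second inequality in~(3) similarly controls the mixed Hessian component; together these furnish the quantitative non-degeneracy that in the stationary setting was provided by Condition~\ref{c:2}. Condition~(2) then controls the third-order Taylor remainders and the regularity of the change of variables. Integrating out yields a contribution of order $\text{Area}(D_i)(b-a)$ from this region.

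The principal technical difficulty is thus implementing the near-diagonal step uniformly in $i$: one has to check that every appearance of the local covariance structure in the proof of Theorem~\ref{t:smb} can be bounded solely through~(2) and~(3), and in particular that the quantitative non-degeneracy survives the conditioning on heights lying in an arbitrary interval (which is precisely why~(3) demands the strict excess $>1+c_2$). Once $\delta$ is fixed small enough to absorb the constants from the far region, comparing the sharp bound with the trivial one yields the $\min$ in the statement.
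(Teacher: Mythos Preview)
Your overall architecture---Kac--Rice, far/near split on $|x-y|$, condition~(4) handling the far region and condition~(3) supplying the near-diagonal non-degeneracy, with the crude height-free bound giving the second branch of the $\min$---is correct and matches the paper. One structural point: the logical direction is inverted. In the paper Theorem~\ref{t:smb2} is the \emph{primary} result, proven directly via Propositions~\ref{p:kr}--\ref{p:kr2} and Lemmas~\ref{l:1}--\ref{l:4}, all of which are stated for a general non-stationary $f$ on a compact domain with constants depending only on $c_1,c_2,c_3$; Theorem~\ref{t:smb} is then a corollary obtained by verifying those hypotheses under Conditions~\ref{c:1}--\ref{c:2}. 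So there is nothing to ``adapt from the stationary case'': the work is already done at the level of generality you need, and the proof of Theorem~\ref{t:smb2} in the paper is literally one sentence invoking those lemmas.

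Your near-diagonal treatment is a genuinely different route from the paper's. The paper \emph{drops} the second height constraint $f(y)\in[a,b]$ outright in the near region (this is why the intensity $I^2$ in Proposition~\ref{p:kr} conditions only on $f(x)=s$, and why $N^2$ contributes $\text{Area}(D)(b-a)$ rather than $\text{Area}(D)(b-a)^2$), and then shows $\sup I^2<\infty$ by explicit covariance-matrix asymptotics: via the auxiliary Lemma~\ref{l:det} and the matrix identity Lemma~\ref{l:matrix} one checks that $\det(\Sigma_4)\asymp r^4$, that two of the three conditional Hessian variances are $O(r^2)$ (so $N(x,y)=O(r^2)$), and that $\sigma_1^2=\mathrm{Var}[f(x)\mid\nabla f(x),\nabla f(y)]$ stays bounded below---the last two being exactly where the strict excesses in condition~(3) enter. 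Your divided-difference change of variables is a legitimate and more conceptual alternative, and your reading of condition~(3) (that $\mathrm{Var}[\partial_v^{(2,0)}f]-1>c_2$ is what keeps the limiting vector $(f,\nabla f,\partial_v\nabla f)$ non-degenerate) is essentially right. The paper's device of simply discarding one height constraint is, however, cleaner than extracting $(b-a)$ from the joint height event via Taylor expansion of $f(y)-f(x)$, and spares you from tracking the interaction between the height conditioning and the Hessian determinants.
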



\section{Proof of the second moment bound}

We shall prove Theorem \ref{t:smb} as a corollary of Theorem \ref{t:smb2}. Let $f$ be a continuous Gaussian field on a compact domain $D \subset \mathbb{R}^2$ with a $C^{3,3}$-smooth covariance kernel. Suppose that $f$ is normalised so that
\begin{equation}
\label{e:norm2}
\mathbb{E}[f(x)]  = 0 \, , \ \textrm{Var}[f(x)] = 1 \, , \ \text{Cov}[f(x), \nabla f(x) ] = 0 \quad \text{and} \quad   \text{Cov}[ \nabla f(x), \nabla f(x) ] = \id_2 ,
\end{equation}
and the vector $( f(x), f(y), \nabla f(x), \nabla f(y) )$ is non-degenerate for all distinct $x, y \in D$.

\smallskip
We begin by introducing a parameter $\delta > 0$, and splitting 
\[ N[a, b]^2 =  \# \{ (x,y) \in D \times D :   f(x) \in [a, b],  f(y) \in [a, b] ,\nabla f(x) = \nabla f(y) = 0 \}   \]
into three terms
\begin{align*}
N^1[a, b; \delta] & =  \# \{ (x,y) \in D \times D  :  |x - y| > \delta, (f(x),  f(y)) \in [a, b]^2,\nabla f(x) = \nabla f(y) = 0 \}  \\
N^2[a, b; \delta] & = \# \{ (x,y) \in D \times D  :  0 < |x-y| \le \delta,  (f(x),  f(y)) \in [a, b]^2, \nabla f(x) = \nabla f(y) = 0 \} \\
N^3[a, b] & = \# \{ (x,x) \in D \times D  :   f(x) \in [a, b], \nabla f(x) = 0  \} 
\end{align*}
so that $N[a, b]^2 =   N^1[a, b; \delta] + N^2[a, b; \delta]   +  N^3[a, b] $. A simple application of the Kac-Rice formula yields the following upper bounds on the expectation of each term:

\begin{proposition}
\label{p:kr}
There exists an absolute constant $c > 0$ such that, for each $a \le b$ and $\delta > 0$,
\begin{align*}
& \mathbb{E}[N^1[a, b; \delta]  ]  \le c \, \text{Area}(D)^2 (b-a)^2 \,  \sup_{ |x-y| \ge \delta} \, \sup_{s, t \in [a, b]}  I^1(x,y; s, t)   , \\
& \mathbb{E}[N^2[a, b; \delta]  ]  \le  c \, \text{Area}(D) (b-a) \,    \sup_{0 <|x-y| \le \delta} \,  \sup_{s \in [a, b]} I^2(x,y; s)   ,
\end{align*}
and
\[   \mathbb{E}[N^3[a, b]  ] \le  c \, \text{Area}(D) (b-a)  \,  \sup_{x \in D} \, \sup_{s \in [a, b]}  I^3(x;s) ,  \]
where $I^1$, $I^2$ and $I^3$ denote the intensity functions
\begin{align*}
  I^1(x,y; s, t) & =   \gamma_{x,y}^1(s, t, 0, 0)  \times   \mathbb{E}[ |\textup{det}(\nabla^2 f(x) \nabla^2 f(y) ) |   \, | \,  f(x) = s , f(y) = t , \nabla f(x) = \nabla f(y) = 0 ] , \\
 I^2(x,y; s) & =   \gamma_{x,y}^2(s, 0, 0) \times  \mathbb{E}[ |\textup{det}(\nabla^2 f(x) \nabla^2 f(y) ) |  \, |  \, f(x) = s , \nabla f(x) = \nabla f(y) = 0 ]    , \\
  I^3(x;s) & =  \gamma_x^3(s,0)  \times  \mathbb{E}[ |\textup{det}(\nabla^2 f(x) ) |  \, | \,  f(x) = s ,  \nabla f(x)  = 0 ]  , 
  \end{align*}
and where $\gamma_{x,y}^1$, $\gamma_{x,y}^2$ and $\gamma_x^3$ denote, respectively, the densities of the (non-degenerate) Gaussian vectors
\[     ( f(x),  f(y), \nabla f(x), \nabla f(y) )  \ , \quad  ( f(x), \nabla f(x), \nabla f(y) )  \quad \text{and} \quad ( f(x), \nabla f(x) )  .   \]
Moreover, $I^1$, $I^2$ and $I^3$ are continuous on $(\mathbb{R}^2 \setminus \{(x,x)\} ) \times \mathbb{R}^2$, $(\mathbb{R}^2 \setminus \{(x,x)\} ) \times \mathbb{R}$ and $\mathbb{R}$ respectively.
\end{proposition}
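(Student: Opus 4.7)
The plan is a direct application of the Kac--Rice formula to the decomposition $N[a,b]^2 = N[a,b](N[a,b]-1) + N[a,b]$: the diagonal part $N^3[a,b] = N[a,b]$ is handled by the standard one-point Kac--Rice formula for zeros of $\nabla f$ subject to a height constraint on $f$, and the off-diagonal sum $N^1 + N^2 = N[a,b](N[a,b]-1)$ by the two-point Kac--Rice formula. All the required hypotheses are provided by the setup: the $C^{3,3}$-smoothness of $\kappa$ makes $\nabla f$ a $C^1$ Gaussian field, the normalisation \eqref{e:norm2} makes $\nabla f(x)$ and $(f(x),\nabla f(x))$ non-degenerate pointwise, and the off-diagonal non-degeneracy of $(f(x),f(y),\nabla f(x),\nabla f(y))$ supplies non-degeneracy of both $(\nabla f(x),\nabla f(y))$ and $(f(x),\nabla f(x),\nabla f(y))$ for all distinct $x,y \in D$, so every Gaussian density appearing in the Kac--Rice integrands is well defined.

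For $\mathbb{E}[N^3]$ and $\mathbb{E}[N^1]$ the formulas give directly
\[ \mathbb{E}[N^3] = \int_D \int_a^b I^3(x;s)\,ds\,dx \qquad \text{and} \qquad \mathbb{E}[N^1] = \int\int_{|x-y|>\delta} \int_{[a,b]^2} I^1(x,y;s,t)\,ds\,dt\,dx\,dy, \]
and bounding each integrand pointwise by its supremum over the relevant region produces the factors $\text{Area}(D)(b-a)$ and $\text{Area}(D)^2(b-a)^2$ respectively. For $\mathbb{E}[N^2]$ the key trick is to first drop the height constraint $f(y)\in[a,b]$ (which only inflates the count) and then apply the two-point Kac--Rice formula to count pairs of distinct zeros of $\nabla f$ with a single height constraint on $f(x)$; the resulting intensity is exactly $I^2$, and bounding by the supremum and integrating $y$ over $B(x,\delta)\cap D$ gives $\text{Area}(D)\cdot\pi\delta^2\cdot(b-a)\cdot \sup I^2$, with the $\pi\delta^2$ being absorbed into the constant $c$.

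The continuity statements for $I^1, I^2, I^3$ reduce to continuity of Gaussian densities and of conditional moments of $|\det\nabla^2 f(x)\nabla^2 f(y)|$ in $(x,y)$; both follow from the continuous $C^{3,3}$-dependence of the relevant covariance matrices on $(x,y)$ combined with their non-degeneracy on the respective continuity domains. The main technical point is justifying the Kac--Rice formula itself in the two-point setting near the diagonal---specifically, integrability of the conditional determinant expectation as $|x-y|\to 0$---which I would handle by a Cauchy--Schwarz estimate using the uniform bounds on variances of second derivatives that the $C^{3,3}$-smoothness provides.
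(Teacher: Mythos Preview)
Your proposal is correct and matches the paper's approach exactly: the paper's proof is a one-line citation of the Kac--Rice formula (Theorem~6.3 of Aza\"is--Wschebor) followed by bounding the integrands by their suprema, and you have spelled out precisely this argument, including the key step of dropping the height constraint on $f(y)$ to obtain the intensity $I^2$. Your closing remark about near-diagonal integrability is more cautious than strictly necessary here---the Kac--Rice identity holds as an equality in $[0,\infty]$ under the stated non-degeneracy hypotheses, so the inequalities of the proposition are automatic even when the suprema happen to be infinite.
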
 

\begin{proof}
This is a direct application of the Kac-Rice formula \cite[Theorem 6.3]{azais2009level} after bounding the relevant integrands by their suprema; the Kac-Rice formula is valid in our setting since $f$ is almost surely $C^2$ and the vector $( f(x), f(y), \nabla f(x), \nabla f(y) ) $ is non-degenerate for $x \neq y$.
\end{proof}

In the case of large height window ($b - a \gg 1$), we bound $N[a,b]^2$ more simply as follows:
\begin{proposition}
\label{p:kr2}
There exists an absolute constant $c > 0$ such that, for each $a \le b$,
\[ \mathbb{E}[N[a, b]^2]  \le c \big(  \text{Area}(D)^2  \,   \sup_{ |x-y| > 0} I^4(x,y)  +  \text{Area}(D)  \, \sup_{x \in D} I^5(x) \big) ,  \]
where $I^4$ and $I^5$ denote the intensity functions
\begin{align*}
  I^4(x,y) & =   \gamma_{x,y}^4(0, 0)  \times   \mathbb{E}[ |\textup{det}(\nabla^2 f(x) \nabla^2 f(y) ) |   \, | \, \nabla f(x) = \nabla f(y) = 0 ] , \\
  I^5(x) & =  \gamma_x^5(0)  \times  \mathbb{E}[ |\textup{det}(\nabla^2 f(x) ) |  \, | \,   \nabla f(x)  = 0 ]  , 
  \end{align*}
and where $\gamma_{x,y}^4$ and $\gamma_x^5$ denote, respectively, the densities of the (non-degenerate) Gaussian vectors
\[   (  \nabla f(x), \nabla f(y)  )  \quad \text{and} \quad  \nabla f(x)   .   \]
Moreover, $I^4$ and $I^5$ are continuous on $(\mathbb{R}^2 \setminus \{(x,x)\} )$ and $\mathbb{R}$ respectively.
\end{proposition}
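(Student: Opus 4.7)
The plan is to discard the height condition entirely, which costs nothing in this (large-window) regime: since $N[a,b]$ is dominated by the total critical-point count $N_{\mathrm{tot}} := \#\{x \in D : \nabla f(x) = 0\}$ and the desired bound is independent of $[a,b]$, it suffices to establish the same inequality with $\mathbb{E}[N[a,b]^2]$ replaced by $\mathbb{E}[N_{\mathrm{tot}}^2]$.

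I would then decompose
\[ N_{\mathrm{tot}}^2 = \#\{(x,y) \in D \times D : x \neq y, \, \nabla f(x) = \nabla f(y) = 0\} + N_{\mathrm{tot}}, \]
which is the analogue of the $\delta = 0$ splitting from Proposition \ref{p:kr} with the height events discarded, and apply the Kac-Rice formula \cite[Theorem 6.3]{azais2009level} to each summand. The hypotheses required are exactly those already verified in the proof of Proposition \ref{p:kr} (almost-sure $C^2$-smoothness of $f$, plus non-degeneracy of $(\nabla f(x), \nabla f(y))$ for $x \neq y$, which follows \emph{a fortiori} from the standing non-degeneracy of $(f(x), f(y), \nabla f(x), \nabla f(y))$). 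The two expectations then become $\int_{D \times D} I^4(x,y)\, dx\, dy$ and $\int_D I^5(x)\, dx$, and bounding each integrand by its supremum yields the claimed inequality directly.

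The only further assertion is the continuity of $I^4$ off the diagonal and of $I^5$ on $\mathbb{R}^2$. Each intensity is a product of a Gaussian density evaluated at $0$ and a conditional expectation of a polynomial in $\nabla^2 f$. The covariance matrix of $(\nabla f(x), \nabla f(y))$ depends continuously on $(x,y)$ via derivatives of the $C^{3,3}$ kernel, and is invertible away from the diagonal, so the density factor is continuous there; the conditional expectation is then continuous by a standard argument using the explicit Gaussian regression formulas (and the $I^5$ case is entirely analogous, using the standing non-degeneracy of $\nabla f(x)$). I anticipate no substantive obstacle: the whole argument is a structural copy of that of Proposition \ref{p:kr}, obtained by removing the height integration throughout.
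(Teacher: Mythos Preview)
Your proposal is correct and follows exactly the route the paper takes: the paper's proof is a single-line appeal to the Kac--Rice formula, and your write-up simply makes explicit the implicit steps (bounding $N[a,b]\le N_{\mathrm{tot}}$, splitting the square into off-diagonal pairs plus the diagonal, and bounding the resulting integrals by suprema). The continuity argument you sketch is also standard and matches what the paper leaves implicit.
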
 
\begin{proof}
This is again an application of the Kac-Rice formula \cite[Theorem 6.3]{azais2009level}.
\end{proof}

The technical heart of the proof is to establish the following bounds on the intensity functions:
\begin{lemma}[Off-diagonal part]
\label{l:1}
Let $\delta > 0$ be given. Suppose that there exist $c_1, c_2> 0$ such that
\begin{equation}
\label{e:l1}
 \sup_{x \in D} \max_{|\alpha| \le 2}   \textrm{Var}[\partial^{\alpha} f(x)]  < c_1 \quad \text{and} \quad  \inf_{|x-y| \ge \delta }  \min_{i \in \{1,2\}} \det(\Sigma_i(x,y))   > c_2   , 
 \end{equation}
where  $\Sigma_1(x,y)$ and $\Sigma_2(x,y)$ denote respectively the covariance matrices of the vectors
\begin{equation}
\label{d:s12}
    (f(x), f(y) ) \, | \, (\nabla f(x), \nabla f(y) )  \quad \text{and} \quad  (\nabla f(x), \nabla f(y) )  .
    \end{equation}
Then there exists a $c > 0$, depending only on $\delta, c_1$ and $c_2$, such that $\sup_{|x-y| \ge \delta}  \sup_{s,t \in \mathbb{R}}  I^1(x,y; s, t)  < c $.
\end{lemma}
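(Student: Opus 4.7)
The plan is to bound separately the density factor $\gamma^1_{x,y}(s,t,0,0)$ and the conditional expectation factor in $I^1(x,y;s,t)$, using the assumptions on $\Sigma_1$ and $\Sigma_2$ to uniformise the estimates over $|x-y|\ge\delta$.

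First I would decompose the density by conditioning on $(\nabla f(x),\nabla f(y))$:
\[
 \gamma^1_{x,y}(s,t,0,0) \;=\; \gamma_{\nabla f(x),\nabla f(y)}(0,0)\,\cdot\,\gamma_{(f(x),f(y))\,|\,(\nabla f(x),\nabla f(y))=0}(s,t).
\]
The first factor equals $(2\pi)^{-2}(\det\Sigma_2(x,y))^{-1/2}$ and is bounded by $(2\pi)^{-2}c_2^{-1/2}$ by hypothesis. The second factor is a bivariate Gaussian density with covariance matrix $\Sigma_1(x,y)$; the hypothesis gives $\det\Sigma_1(x,y)\ge c_2$, while the bound $\textrm{Var}[f(x)],\textrm{Var}[f(y)]\le c_1$ controls the operator norm of $\Sigma_1$ from above and hence the smallest eigenvalue of $\Sigma_1^{-1}$ from below. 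Combining these, there exist $c',c''>0$ depending only on $c_1,c_2$ such that
\[
 \gamma^1_{x,y}(s,t,0,0)\;\le\;c'\,\exp\bigl(-c''(s^2+t^2)\bigr)
\]
uniformly over $|x-y|\ge\delta$, giving Gaussian decay in $(s,t)$.

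Next I would bound the conditional expectation. By Gaussian regression, conditional on $V:=(f(x),f(y),\nabla f(x),\nabla f(y))=(s,t,0,0)$, the six-dimensional vector $W:=(\nabla^2 f(x),\nabla^2 f(y))$ (using the three independent entries of each Hessian) is Gaussian with mean $\textrm{Cov}[W,V]\,\textrm{Var}[V]^{-1}(s,t,0,0)^T$ and conditional covariance bounded in PSD order by $\textrm{Var}[W]$. The first hypothesis in \eqref{e:l1} bounds the entries of $\textrm{Cov}[W,V]$ and $\textrm{Var}[W]$; moreover $\det\textrm{Var}[V]=\det\Sigma_1\cdot\det\Sigma_2\ge c_2^2$ and the entries of $\textrm{Var}[V]$ are bounded, so $\textrm{Var}[V]^{-1}$ has bounded entries. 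Hence each entry of $W$ has, conditionally, mean of size at most $C(|s|+|t|)$ and variance at most $c_1$, with $C$ depending only on $c_1,c_2$.

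Finally, since $\det(\nabla^2 f(x))\det(\nabla^2 f(y))$ is a polynomial of total degree four in the entries of $W$, the Cauchy--Schwarz inequality (or direct expansion) gives
\[
 \mathbb{E}\bigl[|\det(\nabla^2 f(x)\nabla^2 f(y))|\,\big|\,V=(s,t,0,0)\bigr]\;\le\;P(|s|,|t|)
\]
for some fixed polynomial $P$ of degree four, uniformly in $|x-y|\ge\delta$. Multiplying by the exponentially decaying density bound and taking the supremum in $s,t\in\mathbb{R}$ yields the claimed uniform bound. No step is genuinely hard; the only care required is tracking the constants through the Gaussian regression formula to verify that all bounds depend only on $c_1,c_2,\delta$, which is exactly what the hypotheses \eqref{e:l1} were designed to guarantee.
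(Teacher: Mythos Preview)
Your argument is correct. You take a more direct route than the paper: rather than invoking the auxiliary Lemma~\ref{l:det} (which packages the Gaussian-regression optimisation over the conditioning variable into a single bound), you factor $\gamma^1_{x,y}(s,t,0,0)$ explicitly, extract uniform Gaussian decay in $(s,t)$ from the lower bound on $\det\Sigma_1$ and the upper bound on the trace of $\Sigma_1$, and then let this decay absorb the polynomial growth of the conditional determinant expectation. The paper instead applies Cauchy--Schwarz to reduce to a single squared Hessian and then appeals to \eqref{e:ldet2} of Lemma~\ref{l:det}, using $\det\Sigma_3=\det\Sigma_1\det\Sigma_2$ to connect the hypotheses to the conclusion. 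The underlying mechanism is the same---Gaussian regression plus the observation that exponential density decay beats polynomial moment growth---but your version is self-contained for this particular lemma, while the paper's abstraction into Lemma~\ref{l:det} pays off because the same device, in the sharper form~\eqref{e:ldet1}, is reused (and genuinely needed) for the near-diagonal Lemma~\ref{l:2}.
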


\begin{lemma}[Near-diagonal part]
\label{l:2}
Suppose that there exist $c_1, c_2 > 0$ such that 
\begin{equation}
\label{e:l21}
 \sup_{x \in D} \max_{|\alpha| \le 3}  \textrm{Var}[ \partial^{\alpha_1} f_i(x) ] < c_1
\end{equation}
and
\begin{equation}
\label{e:l22}
 \inf_{x \in D, v \in \mathbb{S}^1}    \textrm{Var}[\partial_v^{(2, 0)} f(x) ] > 1+ c_2  \, , \quad \inf_{x \in D, v \in \mathbb{S}^1}     \textrm{Var}[\partial_v^{(1,1)} f(x)]  > c_2  . 
  \end{equation}
Then there exist $\delta, c > 0$, depending only on $c_1$ and $c_2$, such that $ \sup_{0<|x-y| \le \delta}  \sup_{s \in \mathbb{R}} I^2(x,y; s)    < c $.
\end{lemma}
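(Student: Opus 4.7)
The plan is to absorb the $r^{-2}$ singularity of the density $\gamma_{x,y}^2$ as $y \to x$ (with $r := |x-y|$) via a change of variables that brings out a compensating smallness of the conditional Hessian determinant. Writing $y = x + rv$ with $v \in \mathbb{S}^1$, I would introduce the rescaled increment
$$W := r^{-1}(\nabla f(y) - \nabla f(x)) \in \mathbb{R}^2,$$
which converges in $L^2$ to $\partial_v \nabla f(x) = (\nabla^2 f(x))v$ as $r \to 0$. The linear substitution $(\nabla f(x), \nabla f(y)) \mapsto (\nabla f(x), W)$ has Jacobian determinant $r^2$, so
$$I^2(x,y;s) \;=\; r^{-2}\,\tilde\gamma(s,0,0)\cdot \mathbb{E}\bigl[|\det H||\det H'|\,\big|\,f(x) = s,\,\nabla f(x) = 0,\,W = 0\bigr],$$
where $\tilde\gamma$ denotes the density of $(f(x), \nabla f(x), W)$ and $H := \nabla^2 f(x)$, $H' := \nabla^2 f(y)$. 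The task is to show the right-hand side is $O(1)$ uniformly in $s \in \mathbb{R}$, $v \in \mathbb{S}^1$ and $r \in (0, \delta]$ for some $\delta = \delta(c_1,c_2) > 0$.

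\textbf{Bounding the conditional expectation.} Hadamard's inequality in the orthonormal basis $\{v, v^\perp\}$ gives $|\det H| \le |Hv|\,|Hv^\perp|$, and similarly for $H'$. Taylor expansion of the field, with the $L^2$ remainders controlled by the third-order variance bound \eqref{e:l21}, yields
$$W = Hv - \tfrac{r}{2}(\partial_v H)v + O_{L^2}(r^2), \qquad H'v = Hv + r(\partial_v H)v + O_{L^2}(r^2),$$
so $Hv$ and $H'v$ differ from $W$ by $O_{L^2}(r)$. Treating $(Hv, H'v, Hv^\perp, H'v^\perp)$ together with $(f(x), \nabla f(x), W)$ as a jointly Gaussian family and applying the Gaussian regression formula, these identities imply that conditional on $\{f(x) = s, \nabla f(x) = 0, W = 0\}$, both $Hv$ and $H'v$ have mean of order $r(1 + |s|)$ and variance $O(r^2)$, while $Hv^\perp$ and $H'v^\perp$ have mean of order $1 + |s|$ and variance $O(1)$. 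Cauchy--Schwarz and standard Gaussian moment bounds then give
$$\mathbb{E}\bigl[|Hv||Hv^\perp||H'v||H'v^\perp|\,\big|\,\text{cond.}\bigr] \le C_1\, r^2 (1 + |s|)^4.$$

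\textbf{Bounding the density.} I would factor the $5$-dimensional density sequentially as $\tilde\gamma(s,0,0) = \gamma_f(s) \cdot \gamma_{\nabla f \mid f}(0 \mid s) \cdot \gamma_{W \mid f, \nabla f}(0 \mid s, 0)$. The first two factors are uniformly bounded by the normalisation in Theorem~\ref{t:smb2}, and the first also supplies the Gaussian decay $e^{-s^2/2}$ in $s$. For the third factor, the stationarity identities $\textrm{Cov}(f, \partial_v^2 f) = -\textrm{Var}(\partial_v f) = -1$ and $\textrm{Cov}(\nabla f, \partial_v \nabla f) = 0$ (the latter from $\partial^3\kappa(0) = 0$) combine with the lower bounds \eqref{e:l22} to ensure the limiting ($r = 0$) conditional covariance of $W$ given $(f, \nabla f)$ is well-conditioned; the Taylor error $W - \partial_v \nabla f(x)$ is $O_{L^2}(r)$ by \eqref{e:l21}, so for $\delta$ small enough (depending only on $c_1, c_2$) this conditional covariance stays uniformly bounded above and below for all $r \in (0, \delta]$. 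This yields $\tilde\gamma(s, 0, 0) \le C_2 e^{-\alpha s^2}$ for constants depending only on $c_1, c_2$. Multiplying the two bounds, $I^2(x,y;s) \le r^{-2} \cdot C_2 e^{-\alpha s^2} \cdot C_1 r^2 (1+|s|)^4 \le C$, uniformly in $s, v$ and $r \in (0,\delta]$.

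\textbf{Main obstacle.} The technical heart of the proof is the conditional moment computation: one must verify that the regression coefficients expressing $(Hv, H'v, Hv^\perp, H'v^\perp)$ as linear combinations of $(f(x), \nabla f(x), W)$ plus Gaussian residuals remain bounded as $r \to 0$, despite the degeneration at rate $r^2$ of the covariance of the original conditioning vector $(\nabla f(x), \nabla f(y))$. Working with $W$ rather than $\nabla f(y)$ is exactly what resolves this: in the limit $W$ becomes $(\nabla^2 f(x))v$, which is uncorrelated with $\nabla f(x)$ and non-trivially correlated with $f(x)$, so the $5$-dimensional conditioning is non-degenerate under \eqref{e:l22} and the regression is stable. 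All the higher-moment inputs required to make the Taylor arguments quantitative are supplied by \eqref{e:l21}, which is why the $C^{3,3}$-smoothness hypothesis appears.
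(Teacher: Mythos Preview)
Your approach is correct and is, at bottom, the same mechanism as the paper's, repackaged more conceptually. The paper first applies its auxiliary Lemma~\ref{l:det} (with $d=1$, $n=2$) to reduce the bound on $I^2$ to controlling three quantities: $\det\Sigma_4(x,y)$ (covariance of $(f(x),\nabla f(x),\nabla f(y))$), the product $N(x,y)$ of the two largest entries of $\mathbb{E}[(\nabla^2 f(x))_{ij}^2\mid\nabla f(x)=\nabla f(y)=0]$, and $\sigma_1^2=\mathrm{Var}(f(x)\mid\nabla f(x),\nabla f(y))$; it then evaluates these by explicit $5\times5$ and $4\times4$ determinant formulas (its Lemma~\ref{l:matrix}), obtaining $\det\Sigma_4\sim c_3 r^4$, $N(x,y)=O(r^2)$, $\sigma_1^2\to c_4>0$. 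Your linear substitution $(\nabla f(x),\nabla f(y))\mapsto(\nabla f(x),W)$ with Jacobian $r^2$ is exactly what lies behind the paper's $\det\Sigma_4\sim r^4$; your Hadamard bound $|\det H|\le|Hv|\,|Hv^\perp|$ together with $Hv=W+O_{L^2}(r)$ is the conceptual reason two of the three Hessian entries collapse to $O(r)$ under the conditioning, which is precisely the paper's ``product of the largest two'' computation; and your explicit $e^{-\alpha s^2}(1+|s|)^4$ bookkeeping is what Lemma~\ref{l:det} handles abstractly. What you gain is transparency---no $5\times5$ determinant identities---while the paper's route has the advantage that the abstract lemma is reusable for Lemmas~\ref{l:1}, \ref{l:3} and \ref{l:4} as well. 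One small caveat: your invocation of ``stationarity identities'' such as $\mathrm{Cov}(\nabla f,\partial_v\nabla f)=0$ is not literally available in the non-stationary setting of the lemma, but the extra cross-terms are $O(r)$ and do not affect the leading-order non-degeneracy of $\mathrm{Cov}(f,\nabla f,W)$; the paper's explicit computation has the same implicit looseness, absorbed into its $O(r^6)$ remainders.
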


\begin{lemma}[On-diagonal part]
\label{l:3}
Suppose that there exist $c_1 > 0$ such that
\begin{equation}
\label{e:l3}
  \sup_{x \in D}  \max_{|\alpha| \le 2}   \textrm{Var}[\partial^{\alpha} f(x)]  < c_1 . 
  \end{equation}
Then there exists a $c > 0$, depending only on $c_1$, such that $\sup_{x \in D}  \sup_{s \in \mathbb{R}} I^3(x;s)  < c $.
\end{lemma}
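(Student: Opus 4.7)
The plan is direct because we are evaluating a one-point Kac--Rice integrand, so no small-scale degeneration or $|x-y| \to 0$ issue arises. First I would use the normalisation \eqref{e:norm2} to identify $\gamma_x^3$: since $(f(x), \nabla f(x))$ is Gaussian with zero mean, unit marginal variances and vanishing cross-covariances, its covariance matrix is the $3 \times 3$ identity, and hence
\[ \gamma_x^3(s, 0) \;=\; (2\pi)^{-3/2} e^{-s^2/2}. \]
This yields a Gaussian decay factor in $s$ which will absorb any polynomial growth arising from the conditional expectation.

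Next I would reduce the determinant to its entries. Writing $H_{ij} = \partial_i \partial_j f(x)$ and applying AM--GM,
\[ |\det \nabla^2 f(x)| \;=\; |H_{11} H_{22} - H_{12}^2| \;\le\; \tfrac12 (H_{11}^2 + H_{22}^2) + H_{12}^2, \]
so it suffices to bound $\mathbb{E}[H_{ij}^2 \mid f(x) = s,\, \nabla f(x) = 0]$ for each of $(i,j) \in \{(1,1),(2,2),(1,2)\}$. Each $H_{ij}$ conditioned on $(f(x),\nabla f(x)) = (s,0)$ is Gaussian; its conditional variance is bounded above by its unconditional variance, which is at most $c_1$ by \eqref{e:l3}. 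Using that the conditioning covariance matrix is the identity, the conditional mean is
\[ s \cdot \textrm{Cov}[H_{ij}, f(x)] + \langle 0, \textrm{Cov}[H_{ij}, \nabla f(x)] \rangle \;=\; s \cdot \textrm{Cov}[H_{ij}, f(x)], \]
which by Cauchy--Schwarz and \eqref{e:l3} has modulus at most $\sqrt{c_1}\,|s|$. Thus $\mathbb{E}[H_{ij}^2 \mid \cdot] \le c_1 (1 + s^2)$, and summing the three entries gives
\[ \mathbb{E}\bigl[ |\det \nabla^2 f(x)| \bigm| f(x) = s,\, \nabla f(x) = 0 \bigr] \;\le\; 2 c_1 (1 + s^2). \]

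Multiplying by the density, $I^3(x;s) \le 2 c_1 (2\pi)^{-3/2} (1 + s^2) e^{-s^2/2}$, whose supremum over $s \in \mathbb{R}$ depends only on $c_1$, establishing the claim. There is no real obstacle: in contrast to Lemma \ref{l:2}, no small-scale expansion of the covariance is required, and the argument is essentially a crude Gaussian moment estimate at a single point.
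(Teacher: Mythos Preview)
Your argument is correct and follows essentially the same route as the paper: both exploit the normalisation \eqref{e:norm2} to make the covariance of $(f(x),\nabla f(x))$ the identity, bound the determinant entrywise, and use Gaussian regression to control the conditional second moments by an expression of the form $c_1(1+s^2)$ which is then killed by the density factor $e^{-s^2/2}$. The only difference is packaging: the paper invokes its auxiliary Lemma~\ref{l:det} (with $d=n=1$) as a black box, whereas you carry out the same Gaussian computation by hand, which is entirely reasonable here since the identity covariance structure makes the regression trivial.
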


\begin{lemma}[No height window]
\label{l:4}
Suppose that there exist $c_1, c_2 > 0$ such that 
\begin{equation}
\label{e:l41}
 \sup_{x \in D} \max_{|\alpha| \le 3}  \textrm{Var}[ \partial^{\alpha_1} f_i(x) ] < c_1 ,
  \end{equation} 
 \[  \inf_{x \in D, v \in \mathbb{S}^1}    \textrm{Var}[\partial_v^{(2, 0)} f(x) ] > 1+ c_2  \quad \text{and} \quad  \inf_{x \in D, v \in \mathbb{S}^1}     \textrm{Var}[\partial_v^{(1,1)} f(x)]  > c_2  .  \]
  Then there exist $\delta, c > 0$, depending only on $c_1$ and $c_2$, such that $\sup_{0<|x-y| \le \delta} I^4(x,y)  < c $ and $\sup_{x\in D} I^5(x) < c$.
Moreover, let $\delta > 0$ be given and suppose there exists $c_3 > 0$ such that
\[ \inf_{|x-y| \ge \delta }  \det(\Sigma_2(x,y))   > c_3   , \]
 where $\Sigma_2(x, y)$ is as in \eqref{d:s12}. Then there exists $c > 0$, depending only on $\delta$ and $c_3$, such that $\sup_{|x-y| \ge \delta} I^4(x,y)  < c $.
\end{lemma}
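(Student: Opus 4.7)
The plan is to prove the three bounds in Lemma \ref{l:4} separately. The on-diagonal bound on $I^5$ and the off-diagonal bound on $I^4$ are routine consequences of Gaussian regression, while the near-diagonal bound on $I^4$ requires a regularisation argument that parallels the proof of Lemma \ref{l:2}.

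For $I^5$, the normalisation \eqref{e:norm2} forces $\mathrm{Cov}[\nabla f(x), \nabla f(x)] = \id_2$, so $\gamma_x^5(0) = (2\pi)^{-1}$ is a universal constant. The conditional expectation is then controlled by expanding $|\det \nabla^2 f(x)|$ as a sum of quadratic monomials in the entries of the Hessian and applying Cauchy--Schwarz: Gaussian regression ensures the conditional second moments of these entries given $\nabla f(x) = 0$ do not exceed the unconditional ones, which are bounded by $c_1$. For the off-diagonal bound on $I^4$, the density at the origin equals $(2\pi)^{-2} \det(\Sigma_2(x,y))^{-1/2}$, which is bounded by $(2\pi)^{-2} c_3^{-1/2}$ under the extra hypothesis. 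Applying Cauchy--Schwarz to split $|\det \nabla^2 f(x) \det \nabla^2 f(y)|$ into two factors and again using that Gaussian conditioning cannot increase the second moments of the Hessian entries yields the required uniform bound.

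The near-diagonal bound on $I^4$ is the main obstacle. The density $\gamma_{x,y}^4(0,0)$ blows up like $|y-x|^{-2}$ as $x \to y$ because the Gaussian vector $(\nabla f(x), \nabla f(y))$ becomes degenerate, and this must be cancelled by showing that the conditional expectation of $|\det \nabla^2 f(x) \det \nabla^2 f(y)|$ is of order $|y-x|^2$. The strategy is to change variables from $(\nabla f(x), \nabla f(y))$ to $(\nabla f(x), W)$, where $W = (\nabla f(y) - \nabla f(x))/|y-x|$. By Taylor's theorem, $W = \nabla^2 f(x) v + O(|y-x|)$ with $v = (y-x)/|y-x|$, and the assumptions \eqref{e:l41} ensure that $(\nabla f(x), W)$ is uniformly non-degenerate for sufficiently small $|y-x|$. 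This change of variables contributes a Jacobian $|y-x|^{-2}$ which exactly isolates the singularity, while the constraint $W = 0$ forces $\nabla^2 f(x) v = O(|y-x|)$, so every monomial in the cofactor expansion of $\det \nabla^2 f(x)$ containing a factor from the $v$-direction is of order $|y-x|$, giving $|\det \nabla^2 f(x)| = O(|y-x|)$; by symmetry (via another Taylor bound relating $\nabla^2 f(y)$ to $\nabla^2 f(x)$), $|\det \nabla^2 f(y)| = O(|y-x|)$ as well, and combining the two cancels the blow-up.

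Since this mirrors the analysis of Lemma \ref{l:2} with the additional conditioning on $f(x) = s$ simply removed, the argument is strictly easier; one can in fact deduce the near-diagonal statement from Lemma \ref{l:2} by writing $I^4(x,y) = \int_\R I^2(x, y; s)\, ds$ with the added benefit that the density factor $\gamma_{x,y}^4(0,0)$ equals the marginal of $\gamma_{x,y}^2$ integrated over $s$, so uniformity in $s$ in Lemma \ref{l:2} does the work. The main difficulty, as above, is carrying out the $|y-x|^2$ cancellation precisely, but this is inherited from the technical core already contained in the proof of Lemma \ref{l:2}.
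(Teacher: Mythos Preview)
Your treatment of $I^5$ and of the off-diagonal part of $I^4$ is correct and matches the paper's argument. Your main near-diagonal argument for $I^4$---Taylor expanding $\nabla f(y)-\nabla f(x)$ to see that conditioning on $\nabla f(x)=\nabla f(y)=0$ forces the $v$-column of $\nabla^2 f(x)$ (and of $\nabla^2 f(y)$) to have conditional variance $O(|x-y|^2)$, so that the conditional second moment of each determinant is $O(|x-y|^2)$ and cancels the $|x-y|^{-2}$ blow-up of the density---is also correct and is precisely the mechanism the paper uses. The paper packages this slightly differently: it bounds $I^4(x,y)\le c\,N(x,y)/\sqrt{\det\Sigma_2(x,y)}$, then uses the inequality $\det\Sigma_2(x,y)\ge\det\Sigma_4(x,y)$ (since $\sigma_1^2\le 1$ under the normalisation) to reduce directly to the quantities $N(x,y)=O(|x-y|^2)$ and $\det\Sigma_4(x,y)\asymp |x-y|^4$ already computed in the proof of Lemma~\ref{l:2}. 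Your version re-derives the same cancellation in more conceptual language; neither approach is materially different.

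One genuine slip: your alternative route, ``deduce the near-diagonal statement from Lemma~\ref{l:2} by writing $I^4(x,y)=\int_\R I^2(x,y;s)\,ds$ \ldots\ so uniformity in $s$ in Lemma~\ref{l:2} does the work,'' is wrong as stated. The identity $I^4=\int I^2\,ds$ is correct, but a uniform-in-$s$ bound $I^2(x,y;s)\le c$ says nothing about $\int_\R I^2\,ds$. To salvage this you would need to retain, from the proof of Lemma~\ref{l:det}, the Gaussian factor $e^{-s^2/(2\sigma_1^2(x,y))}$ before taking the supremum in $s$, and then observe that $\sigma_1^2(x,y)$ is bounded above (by $\textrm{Var}[f(x)]=1$), which makes the integral in $s$ uniformly bounded. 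Since your primary argument stands on its own, this is a minor point, but the sentence as written is not a valid deduction.
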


The proofs of Lemmas \ref{l:1}--\ref{l:4} reduce to some Gaussian computations which we carry out in the next section. Let us conclude this section by showing how they imply Theorems~\ref{t:smb} and \ref{t:smb2}.

\begin{proof}[Proof of Theorem \ref{t:smb2}]
Under the assumptions of Theorem \ref{t:smb2}, the constant $\delta > 0$ appearing in Lemmas \ref{l:2} and~\ref{l:4} can be chosen uniformly for all $(f_i)_{i \in \mathcal{I}}$. Fix such a $\delta > 0$. Then, again under the assumptions of Theorem \ref{t:smb2}, the conditions in Lemmas \ref{l:1}--\ref{l:4} hold uniformly for all $(f_i)_{i \in \mathcal{I}}$. The proof then follows by combining Propositions~\ref{p:kr} and \ref{p:kr2}, and Lemmas \ref{l:1}--\ref{l:4}.
\end{proof}

\begin{proof}[Proof of Theorem \ref{t:smb}]
By stationarity and since $(f(x), \nabla f(x))$ is non-degenerate, via a linear rescaling of $f$ and the domain $\mathbb{R}$ we may assume the normalisation 
\begin{equation}
\label{e:norm}
 \mathbb{E}[f(x)]  = 0 \, , \ \textrm{Var}[f(x)] = 1 \, , \ \text{Cov}[f(x), \nabla f(x) ] = 0 \quad \text{and} \quad   \text{Cov}[ \nabla f(x), \nabla f(x) ] = \id_2 . 
 \end{equation}
This normalisation changes $\sup_{|b-a| = \lambda} N_R[a,b]$ by a multiplicative constant that does not depend on $\lambda$ and $R$, and so does not affect the conclusion of Theorem \ref{t:smb}. 

\smallskip
It suffices to show that, under Conditions \ref{c:1} and \ref{c:2}, the assumptions in Theorem \ref{t:smb2} are satisfied for $f_i = f$, $D_i = B_i$, and $\mathcal{I} =  [1,\infty)$.

\smallskip
\noindent (1)--(2). Immediate from \eqref{e:norm} and the fact that $\kappa$ is $C^6$.

\noindent (3). Fix $v \in \mathbb{S}^2$ and align the coordinate axis with $v$. By stationarity and the Cauchy-Schwarz inequality applied in Fourier space,
\[  \textrm{Var}[ \partial_v^{(2, 0)} f(x)] =  \int_{s =(s_1, s_2)} s_1^4 \, d\rho(s)  \ge \Big(  \int_{s=(s_1, s_2)}  s_1^2 \, d\rho(s) \Big)^2 = \big( \textrm{Var}[\partial_v^{(1, 0)} f(x)] \big)^2 = 1,   \]
with equality if and only if the spectral measure $\rho$ is supported on a pair of parallel lines $\{ |s_1| = k \}$, $k \ge 0$. Similarly, 
\[ \textrm{Var}[\partial_v^{(1, 1)} f(x)] =   \int_{s =(s_1, s_2)} s_1^2 s_2^2  \, d\rho(s) \ge 0, \]
 with equality if and only if the spectral measure $\rho$ is supported on the lines $\{ |s_1| = 0 \} \cup \{ |s_2| = 0 \}$. Since Condition~\ref{c:2} rules out the cases of equality, and since $\mathbb{S}^1$ is compact, we validate the assumption.
 
\noindent (4). Let $\Sigma_1(x)$ and $\Sigma_2(x)$ be the covariance matrices defined in \eqref{d:s12}, and observe that these are strictly positive-definite under Condition \ref{c:1}. By Gaussian regression (\cite[Proposition 1.2]{azais2009level})
\[  \Sigma_1(x) = M_{11} - M_{12} M_{22}^{-1} M_{12}^T \quad \text{and} \quad \Sigma_2(x) = M_{22} , \]
  where
  \[ M_{11}  =  \left[ {\begin{array}{cc}
   1 & \kappa(x)  \\
 \kappa(x) & 1 \\
  \end{array} } \right]   , \ M_{12}  =  \left[ {\begin{array}{cc}
   0 & -\nabla \kappa(x)  \\
\nabla \kappa(x) & 0 \\
  \end{array} } \right]  \quad \text{and} \quad M_{22}  =  - \left[ {\begin{array}{cc}
   -\id_2 & \nabla^2 \kappa(x)  \\
\nabla^2 \kappa(x) & -\id_2\\
  \end{array} } \right]   \]
are also strictly positive-definite. Since both determinants and inverses are continuous with respect to the entry-wise sup-norm  on the set of strictly positive-definite matrices, this implies that $\det(\Sigma_{1}(x))$ and $\det(\Sigma_2(x))$ are strictly positive and continuous in $x$. Since, under Condition~\ref{c:1},  $\lim_{|x| \to \infty} \max_{|\alpha| \le 2} | \partial^\alpha \kappa(x)| = 0$, it follows that 
\[  \lim_{|x| \to \infty} \textup{det}(\Sigma_1(x) )  = 1      \quad \text{and} \quad  \lim_{|x| \to \infty} \textup{det}(\Sigma_2(x))  = \det(\id_2)^2 = 1  \]
the so the assumption is validated by the continuity of $\det(\Sigma_i(x))$ (and the stationarity of $f$).
\end{proof}

\section{Gaussian computations}
To assist in proving Lemmas \ref{l:1}--\ref{l:4}, we rely on the following auxiliary lemma:

\begin{lemma}
\label{l:det}
Fix $d \in \{1, 2\}$ and $n \in \mathbb{N}$. Let $X$ be a random $2 \times 2$ matrix, let $Y \in \mathbb{R}^d$ and $Z \in \mathbb{R}^{4}$ be random vectors, and suppose that $(X, Y, Z)$ is jointly Gaussian and centred, with $(Y, Z)$ non-degenerate. Let $\varphi$ and $\Sigma$ denote respectively the density and covariance matrix of $(Y, Z)$, and let $\Sigma_{Y|Z}$ denote the covariance matrix of $Y \, | \,Z$ (which does not depend on $Z$ by Gaussian regression). Then there exists a constant $c > 0$, depending only on $n$, such that 
\[ \sup_{y \in \mathbb{R}^d} \varphi(y, 0)  \, \mathbb{E}[  |\textup{det}(X)|^n \, |\, Y = y, Z = 0   ] \]
is bounded above by
\begin{equation}
\label{e:ldet1}
 \frac{c}{ \sqrt{ \textup{det}(\Sigma)} } \bigg(   \prod_{\text{largest two}}    \mathbb{E} \big[X^2_{i, j} \, | \, Z = 0\big]^{n/2}   \bigg) \max \bigg\{   1  , \,  \frac{  \max_k \mathbb{E} \big[ Y_k^2 \big]^{2n}  }{ \textup{det}(\Sigma_{Y|Z})^n }  \bigg\}  , 
 \end{equation}
 where $ \prod_{\text{largest two}}(\cdot)$ denotes the product of the largest two entries of a positive $2 \times 2$ matrix. In turn, \eqref{e:ldet1} is bounded above by
 \begin{equation}
\label{e:ldet2}
 \frac{c}{ \sqrt{ \textup{det}(\Sigma)} } \Big(  \max_{i, j}  \mathbb{E} \big[X^2_{i, j} \big]^n   \Big) \max \bigg\{   1  , \,  \frac{  \max_k \mathbb{E} \big[ Y_k^2 \big]^{2n}  }{ \textup{det}(\Sigma_{Y|Z})^n }  \bigg\}  .
 \end{equation}
 \end{lemma}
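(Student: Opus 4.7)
My plan is to reduce the bound to a Gaussian regression computation followed by two standard estimates: a Gaussian moment bound and a density-times-polynomial supremum. First, I factor the joint density as $\varphi(y, 0) = \varphi_Z(0) \, \varphi_{Y \mid Z = 0}(y)$, where $\varphi_Z(0) \le c / \sqrt{\det \Sigma_Z}$ and $\varphi_{Y \mid Z = 0}$ is the centred Gaussian density on $\mathbb{R}^d$ with covariance $\Sigma_{Y \mid Z}$. Since $\det \Sigma = \det \Sigma_Z \cdot \det \Sigma_{Y \mid Z}$, this immediately extracts the prefactor $c / \sqrt{\det \Sigma}$ appearing in the target.

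Second, Gaussian regression applied to $(X, Y) \mid Z = 0$ provides the decomposition $(X \mid Y = y, Z = 0) \stackrel{d}{=} X_0 + L y$, where $X_0$ is a centred Gaussian $2 \times 2$ matrix independent of $y$ with $\operatorname{Var}(X_{0,ij}) \le \mathbb{E}[X_{ij}^2 \mid Z = 0] =: \sigma_{ij}^2$ (further conditioning decreases variance), and $L = \operatorname{Cov}(X, Y \mid Z = 0) \, \Sigma_{Y \mid Z}^{-1}$ is a deterministic linear map from $\mathbb{R}^d$ to $2 \times 2$ matrices. The elementary inequality $|\det M|^n \le 2^{n-1}(|M_{11} M_{22}|^n + |M_{12} M_{21}|^n)$, Cauchy--Schwarz inside the conditional expectation, and the Gaussian moment bound $\mathbb{E}[N^{2n}] \le c_n (\sigma^{2n} + \mu^{2n})$ for $N \sim \mathcal{N}(\mu, \sigma^2)$ then produce
\[
\mathbb{E}[|\det X|^n \mid Y = y, Z = 0] \le c_n \sum_{\{(ij), (kl)\}} (\sigma_{ij}^n + |L_{ij} y|^n)(\sigma_{kl}^n + |L_{kl} y|^n),
\]
the sum running over the two determinant pairings $\{(11), (22)\}$ and $\{(12), (21)\}$.

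Third, I take $\sup_y$ of $\varphi_{Y \mid Z = 0}(y)$ against this polynomial in $y$, term by term. Bounding the Gaussian density by $\varphi_{Y \mid Z = 0}(y) \le c \exp(-\|y\|^2/(2\lambda_{\max}(\Sigma_{Y \mid Z}))) / \sqrt{\det \Sigma_{Y \mid Z}}$ and using $\sup_y \|y\|^m \exp(-\|y\|^2/(2a)) = c_m a^{m/2}$, each monomial $\|y\|^m$ in the expansion contributes a factor $c_m \lambda_{\max}(\Sigma_{Y \mid Z})^{m/2} / \sqrt{\det \Sigma_{Y \mid Z}}$. The crucial ingredient for the coefficient bounds is
\[
\|L_{ij}\|^2 \le \sigma_{ij}^2 / \lambda_{\min}(\Sigma_{Y \mid Z}),
\]
which follows from $\Sigma_{Y \mid Z}^{-2} \preceq \lambda_{\min}(\Sigma_{Y \mid Z})^{-1} \Sigma_{Y \mid Z}^{-1}$ combined with the regression identity $\operatorname{Cov}(X_{ij}, Y \mid Z) \, \Sigma_{Y \mid Z}^{-1} \, \operatorname{Cov}(X_{ij}, Y \mid Z)^{\top} \le \sigma_{ij}^2$. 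Each of the three types of terms in the expansion (with $\|y\|$-degree $0$, $n$, or $2n$) is then controlled by $\sigma_{ij}^n \sigma_{kl}^n (\lambda_{\max}(\Sigma_{Y \mid Z}) / \lambda_{\min}(\Sigma_{Y \mid Z}))^{\alpha} / \sqrt{\det \Sigma_{Y \mid Z}}$ for some $\alpha \in \{0, n/2, n\}$. The identity $\lambda_{\max}(\Sigma_{Y \mid Z}) / \lambda_{\min}(\Sigma_{Y \mid Z}) = \lambda_{\max}(\Sigma_{Y \mid Z})^2 / \det(\Sigma_{Y \mid Z})$, the bound $\lambda_{\max}(\Sigma_{Y \mid Z}) \le \operatorname{tr}(\Sigma_{Y \mid Z}) \le d \max_k \mathbb{E}[Y_k^2]$, and the trivial interpolation $x^{\alpha} \le \max\{1, x^n\}$ for $0 \le \alpha \le n$ collapse every term into the single envelope $\max\{1, \max_k \mathbb{E}[Y_k^2]^{2n} / \det(\Sigma_{Y \mid Z})^n\}$; the $\alpha = 0$ terms contribute the ``product of the largest two'' factor. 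Multiplying by $\varphi_Z(0)$ yields \eqref{e:ldet1}.

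The main obstacle is the careful tracking of exponents so that every cross term fits under this single envelope; in particular, the sharp operator-norm inequality $\|L_{ij}\|^2 \le \sigma_{ij}^2 / \lambda_{\min}(\Sigma_{Y \mid Z})$ is the crux, since the naive bound $\|L_{ij}\| \le \|\operatorname{Cov}(X_{ij}, Y \mid Z)\| \cdot \|\Sigma_{Y \mid Z}^{-1}\|$ would produce stronger divergence in $1/\det(\Sigma_{Y \mid Z})$ than the lemma allows. Finally, \eqref{e:ldet2} follows at once from \eqref{e:ldet1} since the product of the two largest entries is bounded by $(\max_{ij} \mathbb{E}[X_{ij}^2 \mid Z = 0])^2$, and $\mathbb{E}[X_{ij}^2 \mid Z = 0] \le \mathbb{E}[X_{ij}^2]$ by variance decrease under conditioning.
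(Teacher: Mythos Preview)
Your proof is correct and follows essentially the same route as the paper's: expand $|\det X|^n$, apply Cauchy--Schwarz and the Gaussian moment bound $\mathbb{E}[N^{2n}]\le c_n(\sigma^{2n}+\mu^{2n})$, factor $\varphi(y,0)$ via $\det\Sigma=\det\Sigma_Z\det\Sigma_{Y|Z}$, and then optimise the density times the conditional-mean term over~$y$. The only cosmetic difference is in that last optimisation: the paper diagonalises $\Sigma_{Y|Z}$ and computes $\sup_y (S^T\Lambda^{-1}y)^{2n}e^{-\frac12 y^T\Lambda^{-1}y}$ explicitly by calculus, whereas you decouple via $|L_{ij}y|\le\|L_{ij}\|\|y\|$ and the operator inequality $\|L_{ij}\|^2\le\sigma_{ij}^2/\lambda_{\min}(\Sigma_{Y|Z})$, arriving at the same $\lambda_{\max}/\lambda_{\min}$ control.
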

 
 \begin{remark}
Lemma \ref{l:det} can be compared to \cite[Lemma A.4]{topmixing18}, in which a similar bound was established.
 \end{remark}

\begin{proof} 
Let $c$ denote a positive constant, depending only on $n$, that may change from line to line. Throughout the proof we repeatedly use the fact that conditioning on part of a Gaussian vector reduces the variance of all coordinates. If $M = (M_{i,j})$ is a $2 \times 2$ matrix, then by expanding the determinant it is immediate that
\[ |\textup{det}(M)|^n \le c  \big(  |M_{1,1}^n M_{2,2}^n| + |M_{1,2}^n M_{2,1}^n| \big) .\]
Hence, applying H\"{o}lder's inequality,
\begin{align*}
 \mathbb{E}[ |\textup{det}(X)|^n \, | \, Y = y, Z = 0 ]  &\le    c \Big(   \mathbb{E} \Big[  |X_{1,1}^n X_{2,2}^n| \, | \, Y = y, Z = 0 \Big] +  \mathbb{E} \Big[ | X_{1,2}^n X_{2,1}^n| \, | \, Y = y, Z = 0 \Big] \Big)  \\
 & \le c  \prod_{\text{largest two}}  \Big( \mathbb{E}[X_{i,j}^{2n} \, | \, Y = y, Z = 0] \Big)^{1/2}  .
 \end{align*}
Since a normally distributed random variable $Z_0 \sim \mathcal{N}(\mu, \sigma^2)$ satisfies $\mathbb{E}[ Z_0^{2n} ] \le c \max\{  (\sigma^2)^n, \mu^{2n}  \}$, we have that $\mathbb{E} [ X_{i, j}^{2n}  \, |\, Y = y, Z=0 ]$ is bounded above by
\[ c   \max \Big\{  \mathbb{E} \big[ X^2_{i, j}  \, |\, Y = 0, Z = 0  \big]^n \, , \   \mathbb{E} \big[ X_{i, j}  \, |\, Y = y , Z=0 \big]^{2n} \Big\}   . \]
Recalling that
\[  \varphi(y, 0)   \le c \frac{ e^{-\frac{1}{2} y^T \Sigma^{-1}_{Y|Z} y } }{ \sqrt{ \textup{det}(\Sigma)   }  },  \]
 and since $ \mathbb{E} \big[ X^2_{i, j}  \, |\, Y = 0, Z = 0  \big] \le  \mathbb{E} \big[ X^2_{i, j}  \, |\,  Z = 0  \big] $, to establish \eqref{e:ldet1} it remains to show that
\begin{equation}
\label{e:ldet3}
 \sup_{y \in \mathbb{R}^d} \Big\{ \mathbb{E} \big[ X_{i, j}  \, |\, Y = y , Z = 0 \big]^{2n}  e^{-\frac{1}{2} y^T \Sigma_{Y|Z}^{-1} y }   \Big\} \le c  \,      \frac{   \mathbb{E} \big[X_{i, j}^2 \, | \, Z = 0\big]^n   \max_k  \mathbb{E} \big[ Y_k^2 \big]^{2n} }{ \textup{det}(\Sigma_{Y|Z})^n  }    .
 \end{equation} 
For this, write $\Sigma_Y^{-1} = U^T \Lambda^{-1} U$, where $U = (u_{k_1, k_2})$ is a $d \times d$ orthogonal matrix and $\Lambda = \textup{Diag}(\lambda_k)$ is the $d \times d$ diagonal matrix of (strictly positive) eigenvalues of $\Sigma_{Y|Z}$. Abbreviating $S = (s_k) : = U \mathbb{E}[X_{i, j} Y_k | Z = 0]$ and replacing $y$ by $U y$, by Gaussian regression we have that
\[   \sup_{y \in \mathbb{R}^d}  \Big\{  \mathbb{E} \big[ X_{i, j}  \, |\, Y = y ,Z =0  \big]^{2n}  e^{-\frac{1}{2} y^T \Sigma_{Y|Z}^{-1} y }  \Big\} \le  \sup_{y \in \mathbb{R}^d} \Big\{  \big( S^T \Lambda^{-1} \, y \big)^{2n}  \, e^{ -\frac{1}{2} y^T \Lambda^{-1} y }  \Big\} . \]
Differentiating in $y$ and computing explicitly, the maximum of the expression on the right-hand side is attained, in the case $d = 1$, at
\[ y = \begin{cases}
 \pm  \sqrt{2n \lambda_1}, & s \neq 0, \\
 0, & s = 0 , \\
 \end{cases}
  \]
and, in the case $d = 2$, at
\[ y = (y_1, y_1) = \begin{cases}
 \frac{\pm \sqrt{2n}}{ \sqrt{ s_1^2 \lambda_1^{-1} + s_2^2 \lambda_2^{-1}}    }  \big( s_1, s_2 \big), & (s_1, s_2) \neq (0, 0)  ,  \\
(0, 0) , & (s_1, s_2) = (0,0) . \\
 \end{cases}   \]
In both cases, this yields a maximum value of
\[   (2n/e)^n \, \Big( \sum_k s_k^2  \lambda_k^{-1}  \Big)^n  \le c \, \Big(  \max_k s_k^2  \max_k  \lambda_k^{-1}   \Big)^n    . \]
Since the eigenvalues of a positive-definite real-symmetric matrix are bounded by a constant times the maximum diagonal entry,
\[   \max_k \lambda_k^{-1} =   \frac{ \max_k \lambda_k }{ \textup{det}(\Lambda)} \le c \, \frac{ \max_k \mathbb{E} \big[ Y_k^2 | Z = 0 \big]  }{ \textup{det}(\Sigma_{Y|Z})  }   .  \]
Moreover, since $U$ has entries bounded above in absolute value by $1$ (being orthogonal), and by the Cauchy-Schwarz inequality,
\[  \max_k  s_k  \le   c \, \max_k  |\mathbb{E}[X_{i, j} Y_k \, | \, Z = 0] |  \le c \,  \mathbb{E}[X_{i, j}^2 \, | \, Z = 0]^{1/2}  \,  \max_k \mathbb{E} \big[ Y_k^2 \, | \, Z = 0 \big]^{1/2}  . \]
  Since $\mathbb{E} \big[ Y_k^2 \, |\, Z = 0  \big]  \le \mathbb{E} \big[ Y_k^2 \big]$, combining the above establishes \eqref{e:ldet3} and hence \eqref{e:ldet1}. Finally, \eqref{e:ldet2} follows from \eqref{e:ldet1} since $\mathbb{E} \big[ X_{i, j}^2 \,| \, Z = 0  \big]  \le \mathbb{E} \big[ X_{i, j}^2 \big]$.
\end{proof}
 
We now proceed to the proofs of Lemmas \ref{l:1}--\ref{l:4}. For this we recall that $f$ is centred, which implies that $\nabla f(x)$ and $\nabla^2 f(x)$ are also centred Gaussian random vectors. 
 
\begin{proof}[Proof of Lemma \ref{l:1}]
By the Cauchy-Schwarz inequality, $I^1(x, y; s, t)$ is bounded above by
\[      \gamma_{x,y}^1(s, t, 0, 0)  \max_{z \in \{x, y\}} \mathbb{E}[ |\textup{det}(\nabla^2 f(z)) |^2   \, | \,  f(x) = s , f(y) = t , \nabla f(x) = \nabla f(y) = 0 ]   .  \]
Applying Lemma \ref{l:det} (more precisely \eqref{e:ldet2}) with the setting $d = n= 2$, this is bounded by
\[      \frac{c}{ \sqrt{ \textup{det}(\Sigma_3(x,y))} }  \Big(    \sup_{z \in D} \max_{|\alpha|=2}   \mathbb{E} \big[ (\partial^\alpha f (z))^2 \big]  \Big)^2  \max\Big\{  1 , \frac{ \sup_{z \in D} \max_{|\alpha|=1}  \big( \mathbb{E} \big[ (\partial^\alpha f(z))^2 \big] \big)^4}{ \textup{det}(\Sigma_2(x,y))^2  }  \Big\}  ,   \]
where $c > 0$ is an absolute constant, and $\Sigma_3(x,y)$ denotes the covariance matrix of the vector
\begin{equation}
\label{d:s3}
( f(x),  f(y), \nabla f(x), \nabla f(y) )   .
\end{equation}
Since, by Gaussian regression, $\textup{det}(\Sigma_2(x,y)) =   \textup{det}(\Sigma_3(x,y)) / \textup{det}(\Sigma_1(x,y)) $, the result follows from \eqref{e:norm2} and~\eqref{e:l1}.
\end{proof}

\begin{proof}[Proof of Lemma \ref{l:2}]
Arguing as in the proof of Lemma \ref{l:1}, and this time applying \eqref{e:ldet1} of Lemma~\ref{l:det} with the setting $d=1$ and $n=2$, there exists a $c > 0$ such that
\[ I^2(x, y; s)  \le    \frac{ c N(x,y)  }  { \sqrt{ \textup{det}(\Sigma_4(x,y)) } }   \max \Bigg\{ 1 ,  \frac{ \max_{|\alpha|=1}  \big( \mathbb{E} \big[ (\partial^\alpha f(x))^2 \big] \big)^4}{  \sigma_1^2(x,y) ^2} \Bigg\}, \]
where 
\begin{equation}
\label{d:n}
 N(x,y) =  \prod_{\text{largest two}}    \mathbb{E} \big[ (\nabla^2 f(x))^2_{i, j} \, | \, \nabla f(x) = \nabla f(y) = 0 \big]  ,
 \end{equation}
 and $\Sigma_4(x,y)$ and $\sigma_1^2(x,y)$ denote respectively the covariance matrix of the vectors
\begin{equation}
\label{d:s4}
  ( f(x), \nabla f(x), \nabla f(y) ) \quad \text{and} \quad f(x) \, | \, (\nabla f(x), \nabla f(y) ) .
  \end{equation}

Given \eqref{e:norm2}, it remains to examine the asymptotics, as $|x-y| \to 0$, of the quantities $N(x,y)$,  $\textup{det}(\Sigma_4(x,y))$ and $\sigma_1^2(x,y)$. In particular it is sufficient to prove that, as $|x-y| \to 0$,
\begin{enumerate}
\item  $ N(x,y) = O(|x-y|^2)$;
\item There exists a $c_3 > 0$ such that $ \textup{det}(\Sigma_4(x,y) ) > c_3 |x-y|^4 + O(|x-y|^6)$;  
\item There exists a $c_4 > 0$ such that $\sigma_1^2(x,y)  > c_4 + O(|x-y|^2)$;
\end{enumerate}
where $c_3, c_4$ and the constants implicit in $O(\cdot)$ depend only the constants $c_1$ and $c_2$ defined in \eqref{e:l21} and \eqref{e:l22}.

Let us finish the proof by validating the claimed asymptotics. For this, we rely on the following matrix computation (whose proof is simple to verify):

\begin{lemma}
\label{l:matrix}
For parameters $a_1, a_2, a_3, b_1, b_2 \in \mathbb{R}$, define the matrices  
\[   A_1 =  \left[ 
\begin{array}{ccccc}
1 &  0 & 0 & -a_1 & 0 \\
0 & 1 & 0 & 1-a_2 & 0 \\
0 & 0 & 1 & 0 & 1-a_3 \\
-a_1 & 1-a_2 & 0 & 1 & 0 \\
0 & 0 & 1-a_3 & 0 & 1 \\
\end{array}
\right]    , \ 
A_2 =  \left[ 
\begin{array}{cccc}
 1 & 0 & 1-a_2 & 0 \\
 0 & 1 & 0 & 1-a_3 \\
  1-a_2 & 0 & 1 & 0 \\
  0 & 1-a_3 & 0 & 1 \\
\end{array} 
   \right]  \]
   and
   \[  A_3 =  \left[ 
\begin{array}{ccc}
   0 & 0 & 0   \\
      0 & 0 & 0  \\
         b_1 & 0 & b_2  \\
            0 & b_2 & 0   \\
  \end{array} 
\right].  \]
 Then 
  \[   \textup{det}(A_1) = ( 2a_2 - a_1^2 - a_2^2 ) ( 2a_3 - a_3^2  )  \quad \text{and} \quad   \textup{det}(A_2) = a_2 a_3 (2-a_2)(2-a_3)  . \]
  Moreover, assuming that $\textup{det}(A_2) \neq 0$, the diagonal elements of $A_3^T (A_2)^{-1} A_3$ are equal to $ \Big(  \frac{b_1^2}{2a_2 - a_2^2}, \frac{b_2^2}{2a_3 - a_3^2}, \frac{b_2^2}{2a_2 - a_2^2} \Big) $.
\end{lemma}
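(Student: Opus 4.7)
The plan is to exploit the block structure that emerges from suitable simultaneous row/column permutations of $A_1$ and $A_2$. For $A_1$, I would permute so that indices $\{1,2,4\}$ come first and $\{3,5\}$ second; a glance at the matrix shows that every entry linking the two index sets vanishes, so the permuted $A_1$ is block diagonal with a $3\times 3$ block involving only $a_1, a_2$ and a $2\times 2$ block involving only $a_3$. Similarly, for $A_2$ the permutation $\{1,3\},\{2,4\}$ block-diagonalises it into two $2\times 2$ blocks of the form $\bigl(\begin{smallmatrix} 1 & 1-a \\ 1-a & 1 \end{smallmatrix}\bigr)$ with $a \in \{a_2, a_3\}$.

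Given these block decompositions, the determinant identities reduce to routine calculations. Each $2\times 2$ block of the form above has determinant $1-(1-a)^2 = a(2-a)$, immediately giving $\det(A_2) = a_2(2-a_2)\,a_3(2-a_3)$. For $A_1$, the $3\times 3$ block can be expanded by cofactors along its first row, producing $1 - (1-a_2)^2 - a_1^2 = 2a_2 - a_2^2 - a_1^2$; multiplying by $\det$ of the other block, $2a_3 - a_3^2$, yields the claimed formula for $\det(A_1)$.

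For the quadratic form $A_3^T A_2^{-1} A_3$, the key observation is that $A_3$ has nonzero entries only in rows $3$ and $4$, so only the $2\times 2$ principal submatrix of $A_2^{-1}$ indexed by $\{3,4\}$ can contribute. Because the block-diagonalising permutation places index $3$ in the $a_2$-block and index $4$ in the $a_3$-block, the $(3,4)$ entry of $A_2^{-1}$ vanishes, while the $(3,3)$ and $(4,4)$ entries equal $1/(2a_2 - a_2^2)$ and $1/(2a_3 - a_3^2)$ respectively (the $(2,2)$ entries of the inverses of the two $2\times 2$ blocks). Reading off the diagonal of $A_3^T A_2^{-1} A_3$ then amounts to pairing each column of $A_3$ with the corresponding diagonal entry of $A_2^{-1}$ and the appropriate coefficient $b_1$ or $b_2$, which produces the three stated values.

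None of the steps above is substantive; the only caveat is careful bookkeeping — verifying that the claimed permutations genuinely block-diagonalise $A_1$ and $A_2$, and that the correspondence between the nonzero entries of $A_3$ and the diagonal blocks of $A_2^{-1}$ is tracked correctly so that $b_1$ is paired with the $a_2$-block (column $1$), $b_2$ with the $a_3$-block (column $2$), and $b_2$ again with the $a_2$-block (column $3$).
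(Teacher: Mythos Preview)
Your proposal is correct. The paper itself gives no proof of this lemma beyond the parenthetical remark ``whose proof is simple to verify'', and your block-diagonalisation argument (grouping indices $\{1,2,4\}$ and $\{3,5\}$ for $A_1$, and $\{1,3\}$ and $\{2,4\}$ for $A_2$) is precisely the natural verification and is carried out accurately, including the bookkeeping for the diagonal of $A_3^T A_2^{-1} A_3$.
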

  
Fix $x = (x_1, x_2)  \in D$ and define $K_x(y) = \mathbb{E}[f(x) f(y)]$. Under the normalisation \eqref{e:norm2}, and since $K_x$ is $C^3$, we may write (implicitly evaluating derivatives of $K_x$ at $x$), 
 \begin{align*}
 & K_x(y_1, y_2)  = 1 - \frac{(y_1-x_1)^2}{2}   - \frac{(y_2-x_2)^2}{2}   \\
 &  \qquad + \frac{\partial^{(4, 0)}K_x }{24} \, (y_1-x_1)^4 + \frac{\partial^{(2, 2)}K_x }{6} \, (y_1-x_1)^2 (y_2-x_1)^2  + \frac{\partial^{(0, 4)}K_x }{24}  \, (y_2-x_2)^4 + O(|x-y|^6) , 
\end{align*}
where the constant implicit in $O(\cdot)$ depends only on $c_1$ (defined in \eqref{e:l21}). Let us suppose, without loss of generality, that $y =x + (r, 0)$, for $r > 0$. Recall $\Sigma_2(x,y)$ defined in \eqref{d:s12}, and denote by $\Sigma_5(x,y)$ the covariance matrix between
  \[ ( \nabla f(x), \nabla f(y) )   \quad \text{and}   \quad  \nabla^2 f(x)  , \]
  and by $\Sigma_6(x)$ the covariance matrix of $\nabla^2 f(x)$, considered as the vector
  \[   ( \partial^{(2,0)} f(x),   \partial^{(1,1)} f(x), \partial^{(0,2)} f(x)) . \]  
  Computing the entries explicitly, observe that $\Sigma_4(x,y)$, $\Sigma_2(x,y)$ and $\Sigma_5(x,y)$ have the structure of the matrices $A_1, A_2$ and $A_3$ respectively in Lemma \ref{l:matrix}, with parameter settings
\[  a_1  = r + O(r^3)   \, , \quad a_2  = \frac{\partial^{(4, 0)}K_x }{2} \, r^2 + O(r^4)   \, , \quad a_3 = \frac{\partial^{(2, 2)}K_x }{2} \, r^2 + O(r^4)   ,\]
  \[ b_1 = \partial^{(4, 0)}K_x \, r + O(r^3)   \quad \text{and} \quad b_2    = \partial^{(2, 2)}K_x \, r + O(r^3). \]
 Applying Lemma \ref{l:matrix}, 
\begin{align*}
\textup{det}(\Sigma_4(x,y)) &= \big( 2a_2 - a_1^2 - a_2^2 \big) \big( 2a_3 - a_3^2  \big)  \\
& = \big(  \partial^{(4, 0)} K_x \, r^2 - r^2 + O(r^4) \big) \big(   \partial^{(2, 2)} K_x \, r^2 + O(r^4) \big) \\
& =  (\partial^{(4, 0)} K_x - 1) \, \partial^{(2, 2)} K_x \, r^4 + O(r^6) .
\end{align*}
Since $  \partial^{(4, 0)} K_x =    \mathbb{E}[ (\partial^{(2, 0)} f(x))^2]  $ and $   \partial^{(2, 2)} K_x  = \mathbb{E}[ (\partial^{(1, 1)} f(x))^2] $, the claimed asymptotics for $\textup{det}(\Sigma_4(x,y))$ follow from \eqref{e:l22}. Again applying Lemma~\ref{l:matrix}, the diagonal elements of $  \Sigma_5(x,y)^T \Sigma_2(x,y)^{-1} \Sigma_5(x,y) $ are equal, respectively, to 
\[ \frac{b_1^2}{2a_2 - a_2^2}  =   \frac{  (\partial^{(4, 0)} K_x )^2  \, r^2 + O(r^4) }{ \partial^{(4, 0)} K_x  \, r^2 + O(r^4) } =   \partial^{(4, 0)} \kappa  + O(r^2) , \]
\[  \frac{b_2^2}{2 a_3 - a_3^2 }  = \frac{  (\partial^{(2, 2)} K_x )^2  \, r^2 + O(r^4) }{ \partial^{(2, 2)} K_x  \, r^2 + O(r^4) } =   \partial^{(2, 2)} K_x  + O(r^2)    \]
and
\[   \frac{b_2^2}{2a_2 - a_2^2} =  \frac{  (\partial^{(2, 2)} K_x )^2  \, r^2 + O(r^4) }{ \partial^{(4, 0)} K_x  \, r^2 + O(r^4) } =   \frac{ (\partial^{(2, 2)} K_x)^2}{ \partial^{(4, 0)} K_x} + O(r^2)  . \]
On the other hand, by explicit computation the diagonal elements of $\Sigma_6(x)$ are equal to $\big( \partial^{(4, 0)} K_x  ,  \,  \partial^{(2, 2)} K_x  ,  \,  \partial^{(0, 4)} K_x     \big) $, and so the diagonal elements of $\Sigma_6(x) - \Sigma_5(x,y)^T \Sigma_2(x,y)^{-1} \Sigma_5(x,y) $ are equal to $( 0, 0, O(1)  )  + O(r^2) $. Since by Gaussian regression these diagonal elements are 
\[    \mathbb{E}[ (\nabla^2 f(x))^2_{i,j}   \, | \, ( \nabla f(x) = \nabla f(y) = 0) ] \]
for $(i,j)= (1,1), (1,2), (2,2)$ respectively, we deduce that $N(r) = O(r^2)$ as claimed. Finally, by Gaussian regression, 
\[  \sigma_1^2(x,y)  = \text{det}(\Sigma_4(x,y)) / \text{det}(\Sigma_2(x,y)) ,  \]
and since, by Lemma \ref{l:matrix},
\[  \text{det}(\Sigma_2(x,y))  = a_2 a_3 (2-a_2)(2-a_3)  =  \partial^{(4, 0)} K_x \,  \partial^{(2, 2)} K_x \, r^4 + O(r^6)   \]
we have that
\[  \sigma_1^2(x,y)  =   (\partial^{(4, 0)} K_x - 1)  /   \partial^{(4, 0)} K_x  + O(r^2) > c_4 + O(r^2),\]
as claimed.
\end{proof}

\begin{proof}[Proof of Lemma \ref{l:3}]
Arguing as in the proof of Lemma \ref{l:1}, and this time applying Lemma~\ref{l:det} (more precisely \eqref{e:ldet2}) with the setting $d=1$ and $n=1$, there exists a constant $c > 0$ such that
\[ I^3(x;s)  \le  \frac{c}  { \sqrt{ \textup{det}(\Sigma_7(x)) } } \, \max_{|\alpha|=2}   \mathbb{E} \big[ (\partial^\alpha f(x))^2 \big]    \, \max \Big\{ 1 \, , \, \frac{\max_{|\alpha|=1}  \big( \mathbb{E} \big[ (\partial^\alpha f(x))^2 \big] \big)^2}{\sigma_2(x)^2} \Big\} , \]
where $\Sigma_7(x)$ denotes the covariance matrix of $( f(x), \nabla f(x)  )$, and $\sigma_2^2(x)$ denotes the variance of $f(x) \, | \, \nabla f(x)$. Under the normalisation \eqref{e:norm2}, $\Sigma_7(x) =  \sigma_2^2(x) = 1$, and the result follows from~\eqref{e:l3}.
\end{proof}

\begin{proof}[Proof of Lemma \ref{l:4}]
Arguing as in the proof of Lemma \ref{l:1}, and applying H\"{o}lder's inequality as in the proof of Lemma \ref{l:det}, there exists a $c > 0$ such that
\[ I^4(x, y)  \le    \frac{ c N(x,y)  }  { \sqrt{ \textup{det}(\Sigma_2(x,y)) } }  \le     \frac{ c  \, \Big( \max_{z \in \{x, y\}} \max_{|\alpha|=2}   \mathbb{E} \big[ (\partial^\alpha f (z))^2 \big]  \Big)^2   }  { \sqrt{ \textup{det}(\Sigma_2(x,y)) } }   , \]
where $N(x, y)$ is defined in \eqref{d:n}. Moreover, by Gaussian regression and the normalisation \eqref{e:norm2},
\[  \textup{det}(\Sigma_2(x,y))   = \frac{ \textup{det}(\Sigma_3(x,y)) }   {  \text{det} (\Sigma_1(x, y) )} \ge  \frac{ \textup{det}(\Sigma_3(x,y))  } {  \text{Cov}[ f(x), f(y) ]}  \ge  \textup{det}(\Sigma_3(x,y))  \]
and similarly
\[  \textup{det}(\Sigma_2(x,y))   = \frac{ \textup{det}(\Sigma_4(x,y))  } {\sigma_1^1(x) } \ge  \frac{ \textup{det}(\Sigma_4(x,y))  } {  \textrm{Var}[ f(x) ] } =  \textup{det}(\Sigma_4(x,y)) ,  \]
where $\Sigma_1, \Sigma_3, \Sigma_4$ and $\sigma_1^2$ are defined in \eqref{d:s12}, \eqref{d:s3} and \eqref{d:s4}. Hence
\[ I^4(x, y)  \le    \frac{ c N(x,y)  }  { \sqrt{ \textup{det}(\Sigma_4(x,y)) } }  \le     \frac{ c  \Big( \sup_{z \in \{x,y\}} \max_{|\alpha|=2}   \mathbb{E} \big[ (\partial^\alpha f (z))^2 \big]  \Big)^2   }  { \sqrt{ \textup{det}(\Sigma_3(x,y)) } }   , \]
and the uniform bound on $I^4$ follows as in the proofs of Lemmas \ref{l:1} and \ref{l:2}. Similarly
\[ I^5(x)  \le     \frac{ c  \max_{|\alpha|=2}   \mathbb{E} \big[ (\partial^\alpha f (x))^2 \big]   }{ \sqrt{ \text{det}( \text{Cov}[\nabla f(x), \nabla f(x) ] ) } } , \]
which is uniformly bounded by \eqref{e:norm2} and \eqref{e:l41}.
\end{proof}


\section{Appendix: The one-dimensional case}
\label{s:a}

Analogous bounds also hold in the one-dimensional case. Let $f$ be a $C^1$-smooth stationary Gaussian process, with $\kappa(x) = \textrm{Cov}[f(0),f(x)]$ its covariance kernel. The analogue of Condition \ref{c:1} is the following:
 
 \begin{condition}
\label{c:11d}
\
\begin{itemize}
\item The covariance kernel $\kappa$ is of class $C^6$.
\item For each $x \in \mathbb{R} \setminus \{0\}$, the Gaussian vector $( f(0), f(x),  f'(0),  f'(x) )$ is non-degenerate.
\item As $|x| \to \infty$, $\max_{\alpha \le 2} |  \partial^\alpha \kappa(x) | \to 0$.
\end{itemize}
\end{condition}

For each $R > 0$ and $a \le b$, let $N_R[a, b]$ denote the number of critical points of $f$ in the interval $[0,R]$ whose heights (i.e.\ `critical values') lie in the interval $[a, b]$, i.e.,
\[ N_R[a, b] = \# \{  x \in [-R, R] :  f(x) \in [a, b],  f'(x) = 0 \}  .\]
Then we have the following bound on the second moment of $N_R[a,b]$:

\begin{theorem}
\label{t:smb1d}
Suppose $f$ satisfies Condition \ref{c:11d}. Then there exists a $c > 0$ such that, for all $R \ge 1$ and $a \le b$, 
\[  \mathbb{E}[N_R[a, b]^2]  \le c \,   \min\{ R^2 (b-a)^2 + R (b-a) , \, R^2  \}    . \]
\end{theorem}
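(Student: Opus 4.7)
The plan is to mirror the proof of Theorem \ref{t:smb} almost verbatim: formulate a uniform one-dimensional analogue of Theorem \ref{t:smb2}, prove it via the decomposition $N_R[a,b]^2 = N^1 + N^2 + N^3$ and four lemmas mirroring Lemmas \ref{l:1}--\ref{l:4}, and then deduce Theorem \ref{t:smb1d} by verifying that Condition \ref{c:11d} implies the uniform hypotheses with $f_i = f$, $D_i = [-i, i]$, $\mathcal{I} = [1,\infty)$. The 1D uniform statement requires the normalisations $\mathbb{E}[f(x)] = 0$, $\textrm{Var}[f(x)] = \textrm{Var}[f'(x)] = 1$, $\textrm{Cov}[f(x), f'(x)] = 0$, non-degeneracy of $(f(x), f(y), f'(x), f'(y))$ for $x \neq y$, uniform upper bounds on $\textrm{Var}[\partial^\alpha f]$ up to order three, a single uniform lower bound $\textrm{Var}[f''(x)] > 1 + c_2$ (no mixed-partial condition arises in 1D), and uniform non-degeneracy of the conditional covariance matrices analogous to $\Sigma_1, \Sigma_2$ on $|x-y| \ge \delta$.

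Applying Kac-Rice gives 1D analogues of Propositions \ref{p:kr} and \ref{p:kr2}, in which the Hessian-determinant factor $|\textup{det}(\nabla^2 f(x) \nabla^2 f(y))|$ is replaced by the scalar product $|f''(x) f''(y)|$. The analogues of Lemmas \ref{l:1}, \ref{l:3} and \ref{l:4} then follow by Cauchy-Schwarz together with the obvious scalar version of Lemma \ref{l:det}, which is strictly easier than the $2\times 2$ matrix case since no determinant expansion is needed.

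The substantive work is the 1D analogue of Lemma \ref{l:2}. Setting $r = y-x$ and expanding $\kappa(r) = 1 - r^2/2 + \kappa^{(4)}(0)\, r^4/24 + O(r^6)$, the matrices become $\Sigma_4$ of size $3 \times 3$ (covariance of $(f(x), f'(x), f'(y))$) and $\Sigma_2$ of size $2 \times 2$ (covariance of $(f'(x), f'(y))$), so the Lemma-\ref{l:matrix}-style computation reduces to a direct scalar expansion. One obtains
\[ \textup{det}(\Sigma_2(x,y)) = \kappa^{(4)}(0)\, r^2 + O(r^4), \qquad \textup{det}(\Sigma_4(x,y)) = (\kappa^{(4)}(0) - 1)\, r^2 + O(r^4), \]
both strictly positive at leading order thanks to $\textrm{Var}[f''(x)] = \kappa^{(4)}(0) > 1 + c_2$, and $\sigma_1^2(x,y) = \textup{det}(\Sigma_4)/\textup{det}(\Sigma_2) > c_4 + O(r^2)$. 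Gaussian regression yields $\mathbb{E}[(f''(x))^2 \mid f'(x) = f'(y) = 0] = O(r^2)$, since to leading order conditioning on $f'(y) = 0$ given $f'(x) = 0$ is conditioning on $r f''(x) + O(r^2) = 0$. Combining these ingredients via Cauchy-Schwarz gives the uniform bound on $I^2$ as $r \to 0$.

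The final step is verifying that Condition \ref{c:11d} implies the uniform hypotheses. The only nontrivial point is the strict inequality $\textrm{Var}[f''(x)] > 1$: by Cauchy-Schwarz in Fourier space, $\int s^4 \, d\rho(s) \ge (\int s^2\, d\rho(s))^2 = 1$, with equality iff $\rho$ is supported on $\{\pm k\}$ for a single $k$; but that forces $f(x) = A\cos(kx) + B\sin(kx)$, which renders $(f(0), f(x), f'(0), f'(x))$ degenerate and contradicts Condition \ref{c:11d}. This explains why no analogue of Condition \ref{c:2} is needed in 1D: the non-degeneracy clause already eliminates the equality case. The remaining uniform determinant bounds on $\Sigma_1$ and $\Sigma_2$ for $|x-y| \ge \delta$ follow from Gaussian regression and the decay of $\kappa$ exactly as in item (4) of the proof of Theorem \ref{t:smb}. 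The principal obstacle is really just bookkeeping in the near-diagonal expansion above; there is no conceptual difficulty beyond what Section 3 already handles.
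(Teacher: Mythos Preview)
Your proposal is correct and follows precisely the approach indicated in the paper. The paper's own proof of Theorem~\ref{t:smb1d} consists only of the remark that it is identical to the proofs of Theorems~\ref{t:smb} and~\ref{t:smb2} ``save for the obvious changes in notation'', with simplified versions of Lemmas~\ref{l:det} and~\ref{l:matrix}; you have spelled out exactly those simplifications, including the correct near-diagonal asymptotics and the observation (matching Remark~4.2) that the non-degeneracy clause in Condition~\ref{c:11d} already excludes the Cauchy--Schwarz equality case, so no separate spectral condition is needed.
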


\begin{remark}
In the one-dimensional case we can omit the extra condition, analogous to Condition \ref{c:2}, that the spectral measure of $f$ is not supported on two points, since this is already implied by Condition \ref{c:11d}.
\end{remark}

We can also state a uniform bound analogous to Theorem \ref{t:smb2}.

\begin{theorem}
\label{t:smb21d}
 Let $(f_i)_{i \in \mathcal{I}}$ be a collection of continuous (not necessarily stationary) Gaussian processes, each defined on a compact interval $D_i \subset \mathbb{R}$, with a $C^{3,3}$-smooth covariance kernel. Let $N_i[a, b]$ be the number of critical points of $f_i$ on $D_i$ whose heights lie in $[a, b]$. Suppose that:
\begin{enumerate}
\item The processes are normalised so that, for each $i \in \mathcal{I}$ and $x \in D_i$,
\[ \mathbb{E}[f_i(x)]  = 0 \, , \ \textrm{Var}[f_i(x)] = 1 \, , \ \text{Cov}[f_i(x), f'_i(x) ] = 0 \quad \text{and} \quad   \textrm{Var}[ f'_i(x)] = 1, \]
and $( f_i(x), f_i(y),  f'_i(x),  f'_i(y) )$ is non-degenerate for all distinct $x, y \in D_i$;
\item There exists a constant $c_1 > 0$ such that $\sup_{i \in \mathcal{I} } \sup_{x \in D_i } \max_{|\alpha| \le 3}   \textrm{Var}[\partial^{\alpha} f_i(x)] < c_1$;
\item There exists a constant $c_2>0$ such that $ \inf_{i \in \mathcal{I} } \inf_{x \in D_i}   \textrm{Var}[ f''_i(x)^2] > 1 + c_2 $;
\item For each $\delta> 0$, there exists a constant $c_3 > 0$ such that
\[ \inf_{i \in \mathcal{I} } \inf_{|x-y|  \ge \delta}  \det(\Sigma^i_1(x,y))  > c_3 \quad \text{and} \quad \inf_{i \in \mathcal{I} } \inf_{|x-y|  \ge \delta}  \det(\Sigma^i_2(x,y))  > c_3, \]
where  $\Sigma^i_1(x,y)$ and $\Sigma^i_2(x)$ denote the covariance matrices of $(f_i(x), f_i(y) ) \, | \, (f_i'(x), f_i'(y) )$  and $(f_i'(x), f_i'(y) )$ .
\end{enumerate}
Then there exists a $c > 0$ such that, for all $i \in \mathcal{I}$ and $a \le b$,
\[  \mathbb{E}[N_i[a, b]^2]  \le c \,   \min\{ \text{Len}(D_i)^2 (b-a)^2 + \text{Len}(D_i) (b-a) , \,  \text{Len}(D_i)^2 + \text{Len}(D_i) \}   . \]
\end{theorem}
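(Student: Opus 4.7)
The plan is to repeat the strategy used for Theorem \ref{t:smb2} in one dimension, where the computations simplify considerably because the Hessian $\nabla^2 f$ reduces to the scalar $f''$ and no mixed partial derivative $\partial^{(1,1)}$ appears. First I would derive Theorem \ref{t:smb1d} from Theorem \ref{t:smb21d} exactly as in the 2D case: normalise by stationarity and verify assumptions (1)--(4) of Theorem \ref{t:smb21d} using Condition \ref{c:11d}. Assumption (3) reduces to showing $\textup{Var}[f''(x)] = \int s^4\,d\rho(s) \ge (\int s^2\,d\rho(s))^2 = 1$, with equality iff $\rho$ is supported on two symmetric points $\{\pm k\}$; this case is ruled out by the non-degeneracy of $(f(0),f(x),f'(0),f'(x))$ at small $x$ (as noted in the remark after the statement). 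Assumption (4) is handled by the same Gaussian regression plus continuity argument as in the 2D proof, using the decay of $\kappa$ at infinity.

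To prove Theorem \ref{t:smb21d} itself, I would introduce a parameter $\delta>0$ and split $N[a,b]^2$ into three parts $N^1 + N^2 + N^3$ corresponding to pairs with $|x-y|>\delta$, $0<|x-y|\le\delta$, and $x=y$. The Kac-Rice formula (Proposition \ref{p:kr} in 1D form) bounds the expectations of these by $\text{Len}(D)^2(b-a)^2 \sup I^1$, $\text{Len}(D)(b-a)\sup I^2$, $\text{Len}(D)(b-a)\sup I^3$ respectively, where the intensities are defined as in Proposition \ref{p:kr} but with $\nabla^2 f$ replaced by $f''$ and $\nabla f$ by $f'$. For the no-window case $b-a\gg 1$, the analogue of Proposition \ref{p:kr2} gives the bound $\text{Len}(D)^2 \sup I^4 + \text{Len}(D) \sup I^5$.

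The next step is to prove 1D analogues of Lemmas \ref{l:1}--\ref{l:4}. Lemmas \ref{l:1}, \ref{l:3} and the first half of Lemma \ref{l:4} transfer directly: invoke Lemma \ref{l:det} (applied to the $1\times 1$ matrix $X = f''(z)$, with $n=2$ or $n=1$), noting that $|\textup{det}(X)|^n = |f''(z)|^n$ and that Lemma \ref{l:det} still holds trivially in this degenerate case. The off-diagonal bound then follows from assumption (4) via the Gaussian regression identity $\textup{det}(\Sigma_2) = \textup{det}(\Sigma_3)/\textup{det}(\Sigma_1)$, exactly as in the proof of Lemma \ref{l:1}.

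The main obstacle, as in the 2D case, is the 1D analogue of Lemma \ref{l:2}, i.e.\ the near-diagonal estimate. Here the plan is to Taylor-expand the covariance $K_x(y) = \mathbb{E}[f(x)f(y)]$ to fourth order,
\[ K_x(y) = 1 - \tfrac{(y-x)^2}{2} + \tfrac{\partial^{(4)} K_x}{24}(y-x)^4 + O(|y-x|^6), \]
and read off the entries of the $3\times 3$ covariance matrix $\Sigma_4(x,y)$ of $(f(x), f'(x), f'(y))$, the $2\times 2$ matrix $\Sigma_2(x,y)$ of $(f'(x), f'(y))$, and the $2\times 1$ cross-covariance $\Sigma_5(x,y)$ between $(f'(x), f'(y))$ and $f''(x)$. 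With $r = |x-y|$ and $a = r + O(r^3)$, $b = \tfrac{\partial^{(4)}K_x}{2} r^2 + O(r^4)$, one finds by direct determinant expansion that $\textup{det}(\Sigma_2(x,y)) = \partial^{(4)}K_x \cdot r^2 + O(r^4)$ and $\textup{det}(\Sigma_4(x,y)) = (\partial^{(4)}K_x - 1)\,r^2 + O(r^4)$ (replacing the Lemma \ref{l:matrix} computation with its scalar version), while the Schur complement gives $\mathbb{E}[(f''(x))^2 \,|\, f'(x)=f'(y)=0] = O(1)$ (not vanishing, but crucially bounded). The ratio $N(x,y)/\sqrt{\textup{det}(\Sigma_4(x,y))}$ is then $O(1)/\sqrt{c_2 \cdot r^2}$, and when combined with the density factor $\gamma^2_{x,y}$ (which contributes an additional $r$ from $\textup{det}(\Sigma_2)^{1/2}$ in the denominator versus $r^2$ in $\textup{det}(\Sigma_4)$), the apparent singularity cancels --- the key point being that in 1D we only need $\textup{Var}[f''] > 1 + c_2$ (the analogue of the first inequality in \eqref{e:l22}), and there is no $(1,1)$-condition to verify. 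The remaining pieces, including the second half of Lemma \ref{l:4} (the off-diagonal bound on $I^4$), then follow by the same Gaussian regression inequalities $\textup{det}(\Sigma_2) \ge \textup{det}(\Sigma_3)$ used in the 2D proof. Assembling Propositions (1D analogues of) \ref{p:kr} and \ref{p:kr2} with the four lemmas yields Theorem \ref{t:smb21d}.
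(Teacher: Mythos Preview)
Your overall strategy is exactly the paper's: split into off-diagonal, near-diagonal and on-diagonal pieces, apply the one-dimensional Kac--Rice formula, and bound the resulting intensities via (a scalar version of) Lemma~\ref{l:det}. The off-diagonal, on-diagonal and no-window parts go through just as you say.

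However, your near-diagonal computation contains a genuine error. You claim that
\[
\mathbb{E}\big[(f''(x))^2 \,\big|\, f'(x)=f'(y)=0\big]=O(1),
\]
and then obtain $N(x,y)/\sqrt{\det(\Sigma_4(x,y))}=O(1/r)$, which you try to cancel with an ``additional $r$ from the density factor''. That cancellation is spurious: the factor $1/\sqrt{\det(\Sigma_4)}$ \emph{is} the density $\gamma^2_{x,y}$ already absorbed into the bound from Lemma~\ref{l:det}; there is no further contribution available. In fact the conditional second moment of $f''(x)$ is not $O(1)$ but $O(r^2)$. With the expansion $K_x(y)=1-\tfrac{(y-x)^2}{2}+\tfrac{\partial^{(4)}K_x}{24}(y-x)^4+O(r^6)$, one has $\Sigma_5(x,y)=(0,b_1)^T$ with $b_1=\partial^{(4)}K_x\, r+O(r^3)$ and $\det(\Sigma_2(x,y))=2a_2-a_2^2=\partial^{(4)}K_x\, r^2+O(r^4)$, so by Gaussian regression
\[
\mathbb{E}\big[(f''(x))^2 \,\big|\, f'(x)=f'(y)=0\big]
=\partial^{(4)}K_x-\frac{b_1^2}{2a_2-a_2^2}
=\partial^{(4)}K_x-\partial^{(4)}K_x+O(r^2)=O(r^2).
\]
This is precisely the one-dimensional analogue of the 2D phenomenon in Lemma~\ref{l:2}, where the diagonal of $\Sigma_6-\Sigma_5^T\Sigma_2^{-1}\Sigma_5$ is $(0,0,O(1))+O(r^2)$: conditioning on $f'(y)=0$ together with $f'(x)=0$ is, to leading order in $r$, conditioning on $f''(x)=0$. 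With the correct $N(x,y)=O(r^2)$ and your (correct) $\det(\Sigma_4(x,y))=(\partial^{(4)}K_x-1)r^2+O(r^4)$, the ratio is $O(r^2)/O(r)=O(r)$, which is bounded without any further cancellation. Once you fix this step, the rest of your plan matches the paper's proof.
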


The proof of Theorems \ref{t:smb1d} and \ref{t:smb21d} are identical to the proofs of Theorems \ref{t:smb} and \ref{t:smb2}, save for the obvious changes in notation. Indeed, in this case we only require simplified versions of the auxiliary Lemmas \ref{l:det} and \ref{l:matrix}. 

\medskip

\bibliographystyle{plain}
\bibliography{biblio}

\end{document}